\newcommand{\N}{\mathbb N}
\newcommand{\C}{\mathbb C}
\newcommand{\up}{\textup}
\newcommand{\R}{\mathbb R}
\newcommand{\onetok}{\{1,2,\ldots,k\}}
\theoremstyle{plain}
\numberwithin{equation}{section}
\newtheorem{thm}{Theorem}[section]
\newtheorem{theorem}[thm]{Theorem}
\newtheorem{lem}[thm]{Lemma}
\newtheorem{defn}[thm]{Definition}
\title{Integer compositions with part sizes not exceeding $k$}
\author{Martin E. Malandro}
\address{Box 2206\\
Department of Mathematics and Statistics\\
Sam Houston State University\\
Huntsville, TX\\
77341-2206, USA}
\email{malandro@shsu.edu}
\date{\today}
\keywords{Restricted composition, asymptotic, rhythm pattern, analytic combinatorics}
\subjclass[2010]{05A16, 05A15, 11B37}
\begin{document}

%
%
%
%
%
%
%
%
%
%
%
%
%
%
%

\setcounter{page}{1}

\begin{abstract}
We study the compositions of an integer $n$ whose part sizes do not exceed a fixed integer $k$. We use the methods of analytic combinatorics to obtain precise asymptotic formulas for the number of such compositions, the total number of parts among all such compositions, the expected number of parts in such a composition, the total number of times a particular part size appears among all such compositions, and the expected multiplicity of a given part size in such a composition. Along the way we also obtain recurrences and generating functions for calculating several of these quantities. 
Our results also apply to questions about certain kinds of tilings and rhythm patterns.
\end{abstract}

\maketitle

\section{Introduction}

Let $k$ be a fixed positive integer. We study numerical and combinatorial aspects of the sequence $\{F_n\}_{n=-\infty}^\infty$, defined by 
\begin{equation}
F_n=\begin{cases}
0, & \up{if } n<0;\\
1, & \up{if }n=0;\\
\sum_{a=1}^k F_{n-a}, & \up{if } n> 0,
\end{cases}
\label{eqFnRecurIntro}
\end{equation}
and related sequences. When $k=2$, the numbers $F_n$ form the (shifted) Fibonacci sequence. The numbers $F_n$ are known to count many things, including the tilings of the $1\times n$ rectangle using tiles of length no more than $k$ \cite{Utilitas, ProofsThatCount}, the rhythm patterns of length $n$ with note lengths not exceeding $k$ (Section \ref{SecTilings}, \cite{HallRhythm}), the new cells after the $n$th step in a budding process that starts with one cell and where each cell has one child per step, up to a total of $k$ children per cell \cite{Yeast}, and the compositions of an integer $n$ with part sizes no larger than $k$ (Section \ref{SecRecurrences}, \cite{CompsWithPartsInASet}).

A {\em composition} of a positive integer $n$ is a sequence of positive integers $(\lambda_1,\lambda_2,\ldots,\lambda_m)$ such that $\sum_{j=1}^m \lambda_j = n$. The four compositions of $n=3$, for instance, are (1,1,1), (1,2), (2,1), and (3). It is well known and easy to prove that a positive integer $n$ has $2^{n-1}$ compositions \cite{CombinatoricsOfCompsAndWords}. The integers $\lambda_j$ of a composition are its {\em parts}. 
If $L$ is a set of positive integers, we can consider the set of compositions of $n$ such that every part of each composition lies in $L$. Such compositions are called {\em restricted compositions} or {\em $L$-compositions} of $n$. 

The literature on unrestricted integer compositions is vast, and restricted compositions have also received a fair amount of attention. For example, $\{1,2\}$-compositions were studied in \cite{OnesAndTwos}, $\{1,k\}$-compositions were studied in \cite{OneK}, and $L$-compositions in general were studied in \cite{CompsWithPartsInASet}. See also \cite{PatternsLength3, CombinatoricsOfCompsAndWords, Hit3, Hit2, Hit1, PatternAvoidance}. In addition, both restricted and unrestricted integer compositions are examples of sequence constructions in symbolic combinatorics, so several very general theorems from this field apply to them \cite{AC}. 

In this paper we fix $k$, let $L=\onetok$, and study the set of $L$-compositions of $n$, i.e., the set of compositions of $n$ whose part sizes do not exceed $k$. We use the following notation.

\begin{itemize}
	\item $F_n$ denotes the number of $\onetok$-compositions of $n$. (We show that this definition of $F_n$ agrees with (\ref{eqFnRecurIntro}) in Section \ref{SecRecurrences}.)
	\item $T_n$ denotes the total number of parts among all $\onetok$-compositions of $n$.
	\item $A_n=T_n/F_n$ denotes the average number of parts of a $\onetok$-composition of $n$.
	\item For $j\in\onetok$, $C_{n,j}$ denotes the total number of times the part size $j$ appears among all $\onetok$-compositions of $n$.
	\item For $j\in\onetok$, $A_{n,j}=C_{n,j}/F_n$ denotes the average number of times the part size $j$ appears in a $\onetok$-composition of $n$.
\end{itemize}

In this paper we study these five quantities in a unified way, with the ultimate goal of giving precise asymptotic formulas for all of them. When we say that our asymptotic formulas are {\em precise}, we mean that both their percentage errors and their absolute errors go to $0$ as $n\rightarrow \infty$. In fact, the absolute errors for our asymptotic formulas decay at an exponential rate. 

Previously, Flores \cite{Flores} gave a precise asymptotic for $F_n$, which he derived using the theory of recurrence relations whose characteristic polynomials have no repeated roots. Then, a precise asymptotic for $A_n$ was obtained in \cite[Theorem V.1]{AC} using the methods of analytic combinatorics. Combining these immediately results in a precise asymptotic for $T_n$. Our asymptotic for $C_{n,j}$ is new, and our asymptotic for $A_{n,j}$ improves on the one in \cite[Theorem V.2]{AC}. 

Along the way to developing our asymptotic formulas, we also give recursive methods for computing all five of these quantities and generating functions for $F_n$, $T_n$, and $C_{n,j}$, some of which have appeared previously. Since one of our goals is to provide a unified treatment of these five quantities, we provide proofs of all of our results, including the ones that are not new. In addition, our approach provides an alternate route to Flores's asymptotic for $F_n$.

We use the following notation. Let $G(x)=\sum_{a=1}^k x^a$ and let $\phi$ denote the unique positive real solution to
\begin{equation}
\label{eqDefOfPhi}
\frac{1}{\phi^{1}} + \frac{1}{\phi^{2}} + \cdots + \frac{1}{\phi^{k}} = 1.
\end{equation}
(That there is a unique positive real solution is guaranteed by Descartes' rule of signs.) Let $G'$ and $G''$ denote the ordinary first and second derivatives of $G$, and let $\sigma=1/\phi$. When $k\geq 2$ we have $\phi>1$, and when $k=2$, $\phi$ is the golden ratio. Here are the asymptotic formulas we derive in this paper. (We review big-$O$ notation in Section \ref{SecAsymptotics}.) 

\begin{thm}\label{ThmMyResults}For some constant $A$ with $0<A<1$ we have
\begin{enumerate}
	\item $\displaystyle F_n = \frac{\phi^{n+1}}{G'(\sigma)} + O(A^n) \quad$ \cite{Flores},
	\item $\displaystyle T_n = \frac{\phi^{n+2}}{G'(\sigma)^2}(n+1) + \frac{\phi^{n+1}G''(\sigma)}{G'(\sigma)^3} - \frac{\phi^{n+1}}{G'(\sigma)} +O(A^n)$,
	\item $\displaystyle A_n = \frac{\phi}{G'(\sigma)}(n+1) - 1 + \frac{G''(\sigma)}{G'(\sigma)^2} +O(A^n) \quad$ \cite[Theorem V.1]{AC},
	\item $\displaystyle C_{n,j} =\frac{\phi^{n+2-j}}{G'(\sigma)^2}(n+1-j) + \frac{\phi^{n+1-j}G''(\sigma)}{G'(\sigma)^3}
+O(A^n)$, and
	\item $\displaystyle A_{n,j} = \frac{\phi^{1-j}}{G'(\sigma)}(n+1-j) + \frac{\phi^{-j}G''(\sigma)}{G'(\sigma)^2} + O(A^n)$.
\end{enumerate}
\end{thm}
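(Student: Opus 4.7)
The plan is to derive all five asymptotics from the singularity analysis of three rational generating functions. Since a $\onetok$-composition is a finite sequence of parts from $\{1,\ldots,k\}$, the symbolic method yields $F(x) = 1/(1-G(x))$. Marking each part with an auxiliary variable $u$ gives $1/(1-uG(x))$, and differentiating in $u$ at $u=1$ produces $T(x) = G(x)/(1-G(x))^2$. Marking only parts of size $j$ (by replacing $G(x)$ with $G(x) + (u-1)x^j$ in the denominator) and repeating the procedure gives $C_j(x) = x^j/(1-G(x))^2$.

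Next I would analyze the poles. By Descartes' rule, $\sigma := 1/\phi$ is the unique positive real root of $1-G$. The crucial technical input is that every other root of $1-G(x)$ has modulus strictly greater than $1$; I would verify this either by a short Rouch\'e or argument-principle computation applied to the reciprocal polynomial $y^k - y^{k-1}-\cdots-1$ on the unit circle, or by citing the classical root-distribution result used in \cite{Flores}. Granting this, $\sigma$ is a simple zero of $1-G$, and hence a simple pole of $F$ and a double pole of $T$ and $C_j$; picking any $R$ strictly between $1$ and the next smallest pole modulus lets me write each generating function as its principal part at $\sigma$ plus a function analytic on $|x|<R$, whose $n$th coefficient is $O(R^{-n}) = O(A^n)$ for some $A<1$.

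To extract coefficient asymptotics from the principal part, I would expand $1-G(x) = -G'(\sigma)(x-\sigma) - \tfrac{1}{2}G''(\sigma)(x-\sigma)^2 + \cdots$; a short geometric-series manipulation then gives
\[
\frac{1}{(1-G(x))^2} = \frac{1}{G'(\sigma)^2(x-\sigma)^2} - \frac{G''(\sigma)}{G'(\sigma)^3(x-\sigma)} + (\text{analytic at }\sigma).
\]
Multiplying by the Taylor expansion of $G$ at $\sigma$ (using $G(\sigma)=1$) for $T$, or of $x^j$ at $\sigma$ for $C_j$, reading off the coefficients of $(x-\sigma)^{-2}$ and $(x-\sigma)^{-1}$, and applying the standard identities $[x^n](x-\sigma)^{-2} = (n+1)\phi^{n+2}$ and $[x^n](x-\sigma)^{-1} = -\phi^{n+1}$ yields (1), (2), and (4) after routine simplification. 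The formulas (3) and (5) then follow by dividing (2) and (4) by (1); a cross-term of order $n(A/\phi)^n$ arising from the division is absorbed into $O((A')^n)$ for a suitable $A'<1$.

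The main obstacle I anticipate is the claim that the non-dominant roots of $1-G(x)$ lie strictly outside the unit disk, since this is precisely what upgrades the error from a leading-order estimate like $o(F_n)$ to the genuinely exponentially small $O(A^n)$ with $A<1$ stated in the theorem. Once this root-distribution lemma is in place, the remaining work is essentially bookkeeping with Laurent expansions.
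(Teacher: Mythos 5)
Your proposal is correct and follows essentially the same route as the paper: the same generating functions (your $G(x)/(1-G(x))^2$ equals the paper's $1/(1-G(x))^2-1/(1-G(x))$), the same key root-distribution lemma that all non-dominant zeros of $1-G$ lie strictly outside the unit disk (which the paper imports from Miles), the same Laurent expansion at $\sigma$ with the same principal-part coefficients, and the same division-with-exponentially-small-error step to pass from $F_n$, $T_n$, $C_{n,j}$ to $A_n$ and $A_{n,j}$. The only cosmetic differences are that you obtain the generating functions via the symbolic method with markers rather than from combinatorial recurrences, and you handle the $x^j$ factor in $C_j(x)$ by Taylor-expanding it at $\sigma$ rather than by the index shift $C_{n,j}=[x^{n-j}](1-G(x))^{-2}$; both yield identical formulas.
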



Actually, the formula for $A_n$ in Theorem \ref{ThmMyResults} follows from an asymptotic formula for supercritical sequences, of which $\onetok$-compositions are one example. See \cite{AC} for more on supercritical sequences.
Previously, the best asymptotic result we could find for $A_{n,j}$ is an asymptotic for supercritical sequences \cite[Theorem V.2]{AC} which, when translated into the setting of $\onetok$-compositions, becomes
\[
A_{n,j} = \frac{\phi^{1-j}}{G'(\sigma)}n + O(1).
\]
Our asymptotic formula for $A_{n,j}$ refines this into a precise asymptotic. Also, we remark that the formula for $F_n$ in Theorem \ref{ThmMyResults} was stated differently in \cite{Flores}, but one may use equation (\ref{eqDefOfPhi}) to verify that the formula here is the same as the one in \cite{Flores}.

The rest of the paper is organized as follows. In Section \ref{SecTilings} we review the connection between rhythm patterns and restricted compositions and we derive our recurrences. In Section \ref{SecGenFns} we use our recurrences to obtain our generating functions. We then analyze those generating functions in Section \ref{SecAsymptotics} to obtain our asymptotic results. Finally, Section \ref{SecExampleValues} contains tables of values of $F_n$, $T_n$, $A_n$, $C_{n,j}$, and $A_{n,j}$ and our asymptotic approximations to them for small values of $n$ and $k$.

\section{Rhythm patterns and recurrences}
\label{SecTilings}
\label{SecRecurrences}

In this section we give recursive methods for calculating the quantities $F_n$, $T_n$, $A_n$, $C_{n,j}$, and $A_{n,j}$. Before doing so, however, we mention that our original motivation for studying restricted compositions was musical. The $L$-compositions of an integer $n$ are in bijection with the rhythm patterns of length $n$ with note lengths in $L$ \cite{HallRhythm}. A {\em rhythm pattern} of length $n$ is a sequence of notes (also called hits) and rests played over $n$ evenly spaced pulses, where a note occurs on the first pulse, and on each further pulse either a note or a rest (but not both) occurs. 
The {\em length} of a note is 1 plus the number of rests until the next note or the end of the pattern. The bijection is given by mapping compositions to rhythm patterns part by part, where the part $j$ corresponds to a note of length $j$ (i.e., a hit followed by $j-1$ rests). 
For example, the composition $(2,1,1,1,2,2,1)$ of $n=10$ corresponds to the rhythm pattern
\begin{center}
	hit - rest - hit - hit - hit - hit - rest - hit - rest - hit.
\end{center}
Thus, if we consider $\onetok$-compositions, $F_n$ counts the number of rhythm patterns of length $n$ with no note length exceeding $k$, $T_n$ counts how many notes one would play if one played all such patterns one after another, and $A_n$ says how many notes occur on average in such a pattern. $C_{n,j}$ indicates how many times the note length $j$ occurs among all such patterns, and $A_{n,j}$ indicates how many times the note length $j$ occurs on average in such a pattern. 

We now derive our recurrences. Our recurrence for $F_n$ is well known, and previously appeared in \cite{CompsWithPartsInASet,Yeast}, among others.
\begin{theorem}$F_n$ satisfies\label{ThmFnRecur}
\begin{equation}\label{eqFnRecur}
F_n=\begin{cases}
0, & \up{if } n<0;\\
1, & \up{if }n=0;\\
\sum_{a=1}^k F_{n-a}, & \up{if } n> 0.
\end{cases}
\end{equation}
\end{theorem}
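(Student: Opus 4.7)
The plan is to verify each case of the recurrence directly from the combinatorial definition of $F_n$ as the number of $\{1,\ldots,k\}$-compositions of $n$.

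First I would handle the base cases. For $n<0$, no composition of $n$ exists (compositions are defined only for positive integers, and we may extend the definition by saying the empty sum equals $0$, not a negative number), so $F_n=0$ by definition. For $n=0$, the only composition is the empty composition, so $F_0=1$.

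Next, for $n>0$, I would partition the set of $\{1,\ldots,k\}$-compositions of $n$ according to the value of the first part $\lambda_1$, which must lie in $\{1,\ldots,k\}$. The key observation is the bijection
\[
\bigl\{(\lambda_1,\lambda_2,\ldots,\lambda_m) : \lambda_1=a\bigr\} \;\longleftrightarrow\; \bigl\{(\lambda_2,\ldots,\lambda_m)\bigr\},
\]
given by removing (respectively, prepending) the first part. Under this bijection, the image of the compositions of $n$ with first part $a$ is exactly the set of $\{1,\ldots,k\}$-compositions of $n-a$, which has cardinality $F_{n-a}$. Note that when $a=n$ the preimage corresponds to the single-part composition $(n)$ and the image is the empty composition, consistent with $F_0=1$; when $a>n$ there are no such compositions, consistent with $F_{n-a}=0$. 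Summing over $a\in\{1,\ldots,k\}$ gives the recurrence $F_n=\sum_{a=1}^k F_{n-a}$.

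There is no real obstacle here: the entire argument is a standard first-return-style decomposition, and the only subtlety is confirming that the convention $F_n=0$ for $n<0$ correctly handles the cases $a>n$ (which occur when $n<k$) so that a single clean formula covers all $n>0$. I would therefore present the proof as a short case analysis followed by the bijective argument.
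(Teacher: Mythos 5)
Your proof is correct and takes essentially the same approach as the paper: both condition on a single distinguished part and use the bijection with $\{1,\ldots,k\}$-compositions of $n-a$, the only (immaterial) difference being that you strip the first part while the paper appends to the last. Your explicit handling of the edge cases $a=n$ and $a>n$ via the conventions $F_0=1$ and $F_{n}=0$ for $n<0$ is a welcome bit of extra care.
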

\begin{proof}
Certainly $F_n=0$ if $n$ is negative. $F_0=1$ as there is one composition---the empty composition---of $0$. If $n>0$ then we obtain the $\onetok$-compositions of $n$ by appending 1's to the $\onetok$-compositions of $n-1$, by appending 2's to the $\onetok$-compositions of $n-2$, etc., all the way down to appending $k$'s to the $\onetok$-compositions of $n-k$. Thus we have 
$F_n=F_{n-1}+F_{n-2}+\cdots+F_{n-k}$ for $n>0$.
\end{proof}

Next we obtain a recurrence for $C_{n,j}$ (and hence a recursive method for computing $A_{n,j}$).

\begin{thm}\label{ThmCnjRecur}Let $j\in \onetok$. Then
\begin{equation}
\label{eqCjnRecur}
C_{n,j}=
\begin{cases}
0,&\up{if } n\leq 0;\\
F_{n-j} + \sum_{a=1}^kC_{n-a,j},&\up{if }n>0.
\end{cases}
\end{equation}
\end{thm}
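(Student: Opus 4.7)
The plan is to mimic the proof of Theorem \ref{ThmFnRecur}, partitioning the $\onetok$-compositions of $n$ according to the value of their last part and then summing contributions to $C_{n,j}$ over each class. For the base case $n\leq 0$, when $n<0$ there are no $\onetok$-compositions of $n$ at all, and when $n=0$ the only such composition is the empty one, which contains no parts. In either case no occurrences of the part size $j$ are counted, so $C_{n,j}=0$.

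For $n>0$, I would fix $a\in\onetok$ and consider the $\onetok$-compositions of $n$ whose final part equals $a$. Exactly as in the proof of Theorem \ref{ThmFnRecur}, deleting the last part sets up a bijection between this class and the set of all $\onetok$-compositions of $n-a$; there are $F_{n-a}$ of them. Now, if $\lambda$ is such a composition and $\lambda'$ is the composition of $n-a$ obtained by deleting the final part, then the number of parts of $\lambda$ equal to $j$ is precisely the number of parts of $\lambda'$ equal to $j$ plus $1$ if $a=j$ and plus $0$ otherwise.

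Summing this equality over all $\onetok$-compositions of $n$ with last part $a$, the first contribution totals $C_{n-a,j}$ while the second contributes $F_{n-a}$ if $a=j$ and $0$ otherwise. Summing over $a\in\onetok$ then yields
\[
C_{n,j} \;=\; \sum_{a=1}^{k} C_{n-a,j} \;+\; F_{n-j},
\]
with the convention $F_{n-j}=0$ when $n<j$ (which is consistent with the fact that in this case no composition of $n$ can end in a part of size $j$). There is no real obstacle here; the only thing to be careful about is the bookkeeping when $n-a\leq 0$ or $n<j$, but in each of these cases the terms $C_{n-a,j}$ or $F_{n-j}$ vanish by definition, so the single formula covers all of $n>0$.
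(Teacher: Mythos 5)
Your proof is correct and uses essentially the same argument as the paper: partition the $\onetok$-compositions of $n$ by their last part $a$, use the deletion bijection onto compositions of $n-a$, and account for the one extra occurrence of $j$ contributed by the last part exactly when $a=j$, which yields the $F_{n-j}$ term. The paper merely packages this as a split into compositions ending in $j$ versus not, and your handling of the degenerate cases $n-a\le 0$ and $n<j$ is the same bookkeeping the paper does via its base cases.
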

\begin{proof}
If $n<j$ we have $C_{n,j}=0$, with which this recurrence agrees. Next, if $n=j$, since $F_0=1$ and $C_{a,j}=0$ for $a<j$, this recurrence correctly yields $C_{n,j}=1$. 

Finally, suppose $n>j$. By induction we may assume that this recurrence correctly computes $C_{a,j}$ for all $a<n$. Split the set of $\onetok$-compositions of $n$ into two classes: those that end with $j$ and those that do not. Among those that end with $j$, there are $(C_{n-j,j}+F_{n-j})$ $j$'s. (Specifically, there are a total of $F_{n-j}$ $j$'s at the ends and a total of $C_{n-j,j}$ $j$'s among all other places of these compositions.) Among the $\onetok$-compositions of $n$ that end with $a\in \onetok$, for $a\neq j$, there are $(C_{n-a,j})$ $j$'s. Summing these counts across $a\in \onetok$ yields $C_{n,j}= F_{n-j} + \sum_{a=1}^kC_{n-a,j}$, as desired.
\end{proof}

Finally we obtain a recurrence for $T_n$  (and hence a recursive method for computing $A_n$).

\begin{thm}\label{ThmTnRecur}$T_n$ satisfies 
\begin{equation}
\label{eqTnRecur}
T_n= 
\begin{cases}
0, & \up{if } n\leq 0;\\
F_n + \sum_{a=1}^kT_{n-a}, & \up{if } n>0.
\end{cases}
\end{equation}
\end{thm}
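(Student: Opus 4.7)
The plan is to mirror the structure of the proof of Theorem \ref{ThmCnjRecur}, using a ``last part'' decomposition of the set of $\onetok$-compositions of $n$. The boundary case $n\leq 0$ is immediate: for $n<0$ there are no compositions, and for $n=0$ the only composition is the empty one, which has zero parts, so $T_n=0$.

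For the main case $n>0$, I would partition the $\onetok$-compositions of $n$ according to the value $a\in\onetok$ of the last part. Removing that last part gives a bijection between the $\onetok$-compositions of $n$ ending in $a$ and the $\onetok$-compositions of $n-a$. (When $n-a<0$ there are no such compositions, and when $n-a=0$ only the empty composition qualifies, corresponding to the composition $(a)$ of $n$; both edge cases are consistent with $F_{n-a}=0$ or $F_{n-a}=1$.) Each composition of $n$ in this class contains one more part than its image of size $n-a$, so the total number of parts contributed by this class is $T_{n-a}+F_{n-a}$, where the $T_{n-a}$ accounts for the parts of the shortened composition and the $F_{n-a}$ accounts for the appended copy of $a$ in each.

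Summing over $a\in\onetok$ and invoking Theorem \ref{ThmFnRecur} then gives
\[
T_n \;=\; \sum_{a=1}^k \bigl(T_{n-a}+F_{n-a}\bigr) \;=\; \sum_{a=1}^k T_{n-a} \;+\; F_n,
\]
which is the desired recurrence. Alternatively, since $T_n=\sum_{j=1}^k C_{n,j}$, one could simply sum the identity of Theorem \ref{ThmCnjRecur} over $j\in\onetok$ and again use $\sum_{j=1}^k F_{n-j}=F_n$ to reach the same conclusion; this gives a self-contained corroboration of the combinatorial argument.

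There is no real obstacle here: the only subtlety is verifying that the ``ends in $a$'' bijection behaves correctly at the boundary (when $a=n$, or when $a>n$), and this is handled uniformly by the conventions $T_m=F_m=0$ for $m<0$ together with $F_0=1$ and $T_0=0$. Thus the proof should be short and almost identical in form to the proof of Theorem \ref{ThmCnjRecur}.
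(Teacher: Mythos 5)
Your proof is correct, but your primary argument takes a genuinely different route from the paper's. The paper proves this recurrence purely algebraically: it writes $T_n=\sum_{j=1}^k C_{n,j}$, substitutes the recurrence of Theorem \ref{ThmCnjRecur} for each $C_{n,j}$, interchanges the order of summation, and applies $F_n=\sum_{j=1}^k F_{n-j}$ --- which is exactly the ``alternative'' you mention in passing at the end. Your main argument instead redoes the combinatorics from scratch: you partition the $\onetok$-compositions of $n$ by their last part $a$, observe that deleting that part is a bijection onto the compositions of $n-a$ costing one part each, and so count $T_{n-a}+F_{n-a}$ parts in each class before summing over $a$. Both are valid. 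Your direct decomposition is self-contained (it does not need Theorem \ref{ThmCnjRecur} at all, only Theorem \ref{ThmFnRecur}) and makes the combinatorial meaning of the extra $F_n$ term transparent; the paper's derivation is shorter given that the $C_{n,j}$ recurrence has already been established, and reinforces the identity $T_n=\sum_j C_{n,j}$ tying the two quantities together. Your handling of the boundary cases ($n-a<0$ and $n-a=0$) is careful and correct.
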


\begin{proof}$T_0=0$ because there are no parts in the empty composition. If $n>0$, then we have
\begin{align*}
T_n &= \sum_{j=1}^kC_{n,j} \\
&= \sum_{j=1}^k\left(F_{n-j} + \sum_{a=1}^kC_{n-a,j}\right)\\
&=F_n + \sum_{a=1}^k\sum_{j=1}^k C_{n-a,j} \\
&=F_n + \sum_{a=1}^kT_{n-a},
\end{align*}
using the fact that the recurrences $C_{n,j}= F_{n-j} + \sum_{a=1}^kC_{n-a,j}$ and $F_n=\sum_{j=1}^k F_{n-j}$ are both valid for $n>0$.
\end{proof}


\section{Generating functions}
\label{SecGenFns}

In this section we derive generating functions for $F_n$, $T_n$, and $C_{n,j}$. Recall $G(x)=\sum_{a=1}^kx^a$. The generating function for $F_n$ is well known, and previously appeared in \cite{AC,CompsWithPartsInASet}.

\begin{thm}
\label{ThmFnGen}
Let $F(x)$ be the ordinary generating function for the number of $\onetok$-compositions of $n$. That is, $F(x)=F_0+F_1x+F_2x^2+\cdots$. Then
\begin{equation*}
F(x)=\frac{1}{1-G(x)}.
\end{equation*}
\end{thm}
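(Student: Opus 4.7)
The plan is to convert the recurrence for $F_n$ from Theorem \ref{ThmFnRecur} directly into a functional equation for $F(x)$, and then solve for $F(x)$ algebraically. This is a standard route for turning a linear recurrence with constant coefficients into a rational generating function, and since we already have a clean recurrence in hand it seems more economical than appealing to the symbolic-method construction $\mathrm{SEQ}(\{1,\ldots,k\})$.

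First I would isolate the constant term by writing
\[
F(x) - 1 = \sum_{n=1}^{\infty} F_n x^n,
\]
using the initial condition $F_0 = 1$. Then I would substitute the recurrence $F_n = \sum_{a=1}^{k} F_{n-a}$, valid for $n > 0$, to obtain
\[
F(x) - 1 = \sum_{n=1}^{\infty}\sum_{a=1}^{k} F_{n-a}\, x^n.
\]
Next I would swap the order of summation, pulling out a factor of $x^a$ from the inner sum and reindexing with $m = n-a$:
\[
F(x) - 1 = \sum_{a=1}^{k} x^a \sum_{m=1-a}^{\infty} F_m\, x^m.
\]
Because $F_m = 0$ for all $m < 0$, the inner sum equals $F(x)$ for every $a \in \{1,\ldots,k\}$, so the right-hand side collapses to $G(x)F(x)$. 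Solving $F(x) - 1 = G(x)F(x)$ then yields $F(x) = 1/(1 - G(x))$.

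The only genuine obstacle is justifying the manipulations formally: the double sum must be rearranged as a formal power series (or, if one prefers, argued to converge in a neighborhood of $0$, which is immediate from $F_n \leq 2^{n-1}$), and one has to verify that the reindexed lower limit $m = 1-a$ really does coincide with $m=0$ after using $F_m = 0$ for $m<0$. Both points are routine but worth stating explicitly so that the $n=1,\ldots,k$ boundary terms of the recurrence — where some of the $F_{n-a}$ have negative indices — are handled transparently. No further inputs are needed; the identity $F(x)=1/(1-G(x))$ falls out in one line once the functional equation is in place.
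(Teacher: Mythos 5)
Your proposal is correct and follows essentially the same route as the paper's proof: substitute the recurrence into $\sum_{n\geq 1}F_nx^n$, interchange the order of summation, reindex and use $F_m=0$ for $m<0$ to recover $G(x)F(x)$, then solve the functional equation. Your explicit attention to the negative-index boundary terms is a minor point the paper handles implicitly, but the argument is the same.
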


\begin{proof}We prove this directly. Using $F_0=1$ and the recurrence in (\ref{eqFnRecur}) for $n>0$, we have
\begin{align*}
F(x)&=1+\sum_{n=1}^\infty F_n x^n\\
&= 1+ \sum_{n=1}^\infty \sum_{a=1}^kF_{n-a}x^n\\
& = 1+ \sum_{a=1}^k\sum_{n=1}^\infty F_{n-a}x^n\\
&=1+ \sum_{a=1}^kx^a\sum_{n=1}^\infty F_{n-a}x^{n-a}\\
&=1+ \sum_{a=1}^kx^a F(x).
\end{align*}
Solving for $F(x)$ yields
$
F(x)=1/({1-\sum_{a=1}^kx^a}),
$
as desired.
\end{proof}

Next, for $j\in\onetok$, we obtain the generating function for $C_{n,j}$. We remark that this generating function can also be obtained by differentiating formula (7) on p.\ 293 of \cite{AC} with respect to $u$ and evaluating at $u=1$. There the appropriate generating function was built directly from the combinatorics of sequences. Here we derive it from our recursive formula (\ref{eqCjnRecur}) instead.

\begin{thm}
\label{ThmCjnGen}
Let $j\in\onetok$ and let $C_j(x)$ be the ordinary generating function for the total number of occurrences of the part size $j$ among all $\onetok$-compositions of $n$. That is, $C_j(x) = C_{0,j}+C_{1,j}x+C_{2,j}x^2+\cdots$. Then
\begin{equation*}
C_j(x)=\frac{x^j}{\left(1-G(x)\right)^2}.
\end{equation*}
\end{thm}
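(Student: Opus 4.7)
The plan is to mimic the direct calculation used to prove Theorem \ref{ThmFnGen}, but now starting from the two-term recurrence $C_{n,j} = F_{n-j} + \sum_{a=1}^k C_{n-a,j}$ established in Theorem \ref{ThmCnjRecur}. Since $C_{n,j}=0$ for $n\leq 0$, the constant term of $C_j(x)$ vanishes, so I would write
\[
C_j(x) = \sum_{n=1}^\infty C_{n,j} x^n = \sum_{n=1}^\infty F_{n-j}\,x^n + \sum_{n=1}^\infty \sum_{a=1}^k C_{n-a,j}\,x^n,
\]
and then simplify each piece separately.

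For the first piece, I would use that $F_{n-j}=0$ whenever $n<j$ to truncate the sum to $n\geq j$, then reindex $m=n-j$ and pull out $x^j$ to obtain $x^j F(x)$. For the second piece, I would swap the order of summation (as in the proof of Theorem \ref{ThmFnGen}), reindex $m=n-a$ in the inner sum, and use $C_{m,j}=0$ for $m\leq 0$ to recognize the inner sum as $C_j(x)$. The $a$-sum then collapses to $G(x)\,C_j(x)$, giving the functional equation
\[
C_j(x) = x^j F(x) + G(x)\,C_j(x).
\]
Solving for $C_j(x)$ and substituting the known formula $F(x)=1/(1-G(x))$ from Theorem \ref{ThmFnGen} yields $C_j(x) = x^j/(1-G(x))^2$, as claimed.

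There is no real obstacle here; the only thing to be careful about is the bookkeeping for small indices in the reindexing step, namely justifying that lowering the start of the summation indices from $1-a$ (or $1-j$) down to $0$ (or $0$) introduces no extra terms because the relevant coefficients are zero. This is exactly the same observation that permits the corresponding reindexing in the proof of Theorem \ref{ThmFnGen}, so the argument is completely parallel.
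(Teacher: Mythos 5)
Your proposal is correct and follows essentially the same route as the paper's own proof: both start from the recurrence $C_{n,j}=F_{n-j}+\sum_{a=1}^k C_{n-a,j}$, reindex the two sums (using the vanishing of $F_m$ and $C_{m,j}$ for the relevant small indices) to obtain the functional equation $C_j(x)=x^jF(x)+G(x)C_j(x)$, and then solve using $F(x)=1/(1-G(x))$. No issues.
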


\begin{proof}Using $C_{0,j}=0$ and equation (\ref{eqCjnRecur}) we have
\begin{align*}
C_j(x) &= \sum_{n=1}^\infty C_{n,j} x^n \\
&= \sum_{n=1}^\infty \left( F_{n-j} + \sum_{a=1}^kC_{n-a,j} \right)x^n\\
&= \sum_{n=1}^\infty F_{n-j}x^n + \sum_{n=1}^\infty\sum_{a=1}^kC_{n-a,j}x^n \\
&=x^j\sum_{n=1}^\infty F_{n-j}x^{n-j} + \sum_{a=1}^kx^a \sum_{n=1} C_{n-a,j} x^{n-a}\\
&=x^j F(x) + \sum_{a=1}^kx^a C_j(x).
\end{align*}
Solving for $C_j(x)$ we obtain $C_j(x)=x^jF(x) / (1-\sum_{a=1}^kx^a) = x^j/(1-\sum_{a=1}^kx^a)^2$, as desired.
\end{proof}

Theorem \ref{ThmCjnGen} also reveals the nice fact that for a fixed value of $k$, the sequences $\{C_{n,j}\}_{n=0}^\infty$ for $j\in \onetok$ are all just shifts of the same sequence. See Table \ref{TableCnjExamples}, for example. 

Finally we obtain the generating function for $T_n$. This generating function previously appeared more generally in the context of sequence constructions on p.\ 178 of \cite{AC}. Here we derive it using our recursive formula (\ref{eqTnRecur}) instead. 

\begin{thm}
\label{ThmTnGen}
Let $T(x)$ be the ordinary generating function for the total number of parts among all $\onetok$-compositions of $n$. That is, $T(x)=T_0 + T_1 x + T_2 x^2 + \cdots$. Then
\[
T(x) = \frac{1}{(1-G(x))^2} - \frac{1}{1-G(x)}.
\]
\end{thm}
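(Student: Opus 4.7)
The plan is to mirror the derivation of $F(x)$ and $C_j(x)$: take the generating-function identity straight from the recurrence of Theorem \ref{ThmTnRecur}, solve a linear equation in $T(x)$, and then simplify using the already-established identity $F(x) = 1/(1-G(x))$.

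First I would observe that because $T_0 = 0$, we have $T(x) = \sum_{n=1}^\infty T_n x^n$, so there is no need to handle the constant term separately. Substituting the recurrence $T_n = F_n + \sum_{a=1}^k T_{n-a}$ (valid for $n>0$) gives
\begin{align*}
T(x) &= \sum_{n=1}^\infty F_n x^n + \sum_{a=1}^k \sum_{n=1}^\infty T_{n-a} x^n \\
&= \bigl(F(x) - 1\bigr) + \sum_{a=1}^k x^a \sum_{n=1}^\infty T_{n-a} x^{n-a}.
\end{align*}
Since $T_m = 0$ for $m\leq 0$, the inner sum on the second line equals $T(x)$, so the right-hand side collapses to $F(x) - 1 + G(x)T(x)$. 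This is the same kind of index-shift bookkeeping used in the proofs of Theorems \ref{ThmFnGen} and \ref{ThmCjnGen}, which is the only place any care is needed; I would expect this step to be the main (and only minor) obstacle.

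Next I would solve the resulting linear equation $T(x)(1 - G(x)) = F(x) - 1$ to obtain $T(x) = (F(x)-1)/(1-G(x))$. Substituting $F(x) = 1/(1-G(x))$ and simplifying
\[
T(x) = \frac{\frac{1}{1-G(x)} - 1}{1 - G(x)} = \frac{1 - (1-G(x))}{(1-G(x))^2} = \frac{1}{(1-G(x))^2} - \frac{1}{1-G(x)},
\]
which is the claimed form. Alternatively one can leave the answer as $G(x)/(1-G(x))^2$ and note the two expressions agree; writing it as the difference $1/(1-G)^2 - 1/(1-G) = F(x)^2 - F(x)$ is the form stated in the theorem and also the form that is most convenient for the singularity analysis to be carried out in Section \ref{SecAsymptotics}.
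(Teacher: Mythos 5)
Your proposal is correct and follows essentially the same route as the paper's proof: substitute the recurrence from Theorem \ref{ThmTnRecur} into the series, shift indices using $T_m=0$ for $m\leq 0$ to recover $G(x)T(x)$, and solve the linear equation using $F(x)=1/(1-G(x))$. The index-shift bookkeeping you flag is handled exactly as in the paper, so there is nothing to add.
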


\begin{proof}
Using $T_0=0$, the recurrence in (\ref{eqTnRecur}), and Theorem \ref{ThmFnGen}, we have
\begin{align*}
T(x)&=\sum_{n=1}^\infty T_n x^n \\
&=\sum_{n=1}^\infty \left(F_n + \sum_{a=1}^k T_{n-a}\right)x^n\\
&=\sum_{n=1}^\infty F_n x^n + \sum_{a=1}^k\sum_{n=1}^\infty T_{n-a}x^n\\
&=F(x)-1 + \sum_{a=1}^k x^a\sum_{n=1}^\infty T_{n-a}x^{n-a}\\
&=\frac{1}{1-G(x)}-1 + \sum_{a=1}^kx^aT(x).
\end{align*}
Solving for $T(x)$ yields $T(x)=1/(1-G(x))^2 - 1/(1-G(x))$, as desired.
\end{proof}


\section{Asymptotics}
\label{SecAsymptotics}

In this section we use the analytic properties of our generating functions to obtain our asymptotic formulas for $F_n$, $T_n$, $A_n$, $C_{n,j}$, and $A_{n,j}$. In this section we use $n$ to stand for a nonnegative integer, $x$ a real number, and $z$ a complex number. In addition, for purposes of nondegeneracy in our arguments, in this section we assume $k>1$. However, it is easily verified by inspection that all of our asymptotic results also hold for $k=1$.
In Section \ref{SecAsymptoticPreliminaries} we review the ideas from analytic combinatorics and establish several facts we need to derive our asymptotic results. In Section \ref{SecAsymptoticDerivations} we prove our asymptotic results.

\subsection{Preliminaries}
\label{SecAsymptoticPreliminaries}

We are going to obtain our asymptotic formulas using the methods of analytic combinatorics \cite{AC,GF}. Namely, we will view the generating functions obtained in Section \ref{SecGenFns} as functions of a complex variable and we will analyze their dominant singularities to give us information about the growth rate of their coefficients. First, we quickly review big-$O$ notation.

\begin{defn}Let Let $\N=\{0,1,2,\ldots\}$ and let $f,g:\N\rightarrow\R$. We write $f(n)=O(g(n))$ if there are positive real numbers $C$ and $N$ for which
\[
|f(n)|\leq C |g(n)|
\]
whenever $n > N$.
\end{defn}
If $f, g,$ and $h$ are functions, whenever we write an equation like
\[
f(n) = h(n) + O(g(n)),
\]
what we mean is
\[
f(n) = h(n) + E(n)
\]
for some function $E$, where $E(n)=O(g(n))$.

The basic observation from complex analysis that we will need is this \cite[Theorem 2.4.3]{GF}.

\begin{thm}Let $f(z)=\sum_{n=0}^\infty a_n z^n$ be analytic in a region containing the origin, and let $z_0\neq 0$ be any singularity of $f(z)$ of smallest modulus. Let $R=|z_0|$ 
and fix a value $\epsilon>0$. Then there exists $N$ such that, for all $n>N$, we have
\[
|a_n| < \left( \frac{1}{R} + \epsilon \right)^n.
\]
Also, for infinitely many $n$ we have
\[
|a_n| > \left(\frac{1}{R} - \epsilon \right)^n.
\]
\end{thm}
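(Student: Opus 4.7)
The plan is to reduce this to the Cauchy--Hadamard theorem, which states that the radius of convergence $\rho$ of the power series $\sum a_n z^n$ satisfies $\limsup_{n\to\infty}|a_n|^{1/n} = 1/\rho$. The first step is to identify $\rho$ with $R$. Since $f$ is analytic on a neighborhood of $0$ and has no singularity inside the disk $|z|<R$ (as $z_0$ is chosen to have smallest modulus among singularities of $f$), the Taylor expansion of $f$ at $0$ converges on $|z|<R$; conversely, the series cannot converge on any strictly larger open disk about $0$, since a convergent power series defines an analytic function on its disk of convergence, and such an analytic function would provide a continuation of $f$ through $z_0$, contradicting that $z_0$ is a singularity. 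Hence $\rho = R$.

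For the upper bound, I would fix $\epsilon>0$ and pick $r$ with $R/(1+\epsilon R) < r < R$, so that $1/r < 1/R+\epsilon$. Since $r$ lies strictly inside the disk of convergence, the series $\sum a_n r^n$ converges absolutely, which forces $a_n r^n\to 0$. In particular, there exists $N$ such that $|a_n r^n|\leq 1$ for all $n>N$, and then $|a_n| \leq (1/r)^n < (1/R+\epsilon)^n$, as required.

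For the lower bound, I would argue by contradiction. Assume only finitely many $n$ satisfy $|a_n|>(1/R-\epsilon)^n$, restricting to $\epsilon$ small enough that $1/R-\epsilon>0$ (the case $1/R-\epsilon\leq 0$ is vacuous since $|a_n|\geq 0$). Then eventually $|a_n|\leq (1/R-\epsilon)^n$, so a geometric comparison shows $\sum a_n z^n$ converges absolutely whenever $|z|<1/(1/R-\epsilon) = R/(1-\epsilon R)$, a value strictly greater than $R$. This contradicts the identification $\rho=R$, completing the proof.

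The main obstacle is not computational but conceptual: pinning down the equality $\rho=R$, which rests on the standard principle that the disk of convergence of a Taylor series reaches exactly to the nearest singularity of the function it represents. Once this identification is in place, both halves of the theorem are essentially just an unpacking of the $\limsup$ definition of the radius of convergence, with a small bit of bookkeeping to convert between statements about $|a_n|^{1/n}$ and exponential bounds on $|a_n|$.
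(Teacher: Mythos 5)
The paper does not prove this statement at all---it is quoted verbatim from the reference cited as \cite[Theorem 2.4.3]{GF} (Wilf's \emph{generatingfunctionology}) and used as a black box---so there is no in-paper argument to compare against. Your proof is correct and is the standard one: identify the radius of convergence $\rho$ with the distance $R$ to the nearest singularity ($\rho\geq R$ because $f$ is analytic on $|z|<R$, and $\rho\leq R$ because a larger disk of convergence would analytically continue $f$ through $z_0$), and then both inequalities fall out of convergence/divergence of the series on circles of radius slightly less than or slightly more than $R$. One microscopic quibble: in the boundary case $\epsilon=1/R$ your parenthetical ``vacuous since $|a_n|\geq 0$'' is not quite enough, since you then need $|a_n|>0$ for infinitely many $n$; this follows because a power series with only finitely many nonzero coefficients is a polynomial, hence entire, contradicting the existence of the singularity $z_0$. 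With that one-line patch the argument is complete.
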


The main idea is then the following (see also \cite{AC,GF}). Let $H(z)=h_0+h_1 z + h_2 z^2+\cdots$ be a generating function (in particular, $h_i\geq 0$ for all $i$) with only isolated singularities and whose coefficient growth rate we'd like to understand. We use the standard notation $[z^n]H(z)$ to stand for the coefficient $h_n$ of $z^n$ in $H(z)$. Suppose that, as a function of a complex variable $z$, $H(z)$ is analytic in a region containing the origin and has radius of convergence $R$. Then $H(z)$ has a singularity on the circle $\{z\in \C:|z|=R\}$. Suppose there is only one singularity $z_0$ on this circle (as there will be for our generating functions), and let $S(z)$ be the principal part of $H(z)$ at that singularity. We call $z_0$ the {\em dominant} singularity of $H(z)$. Then the function $H(z)-S(z)$ is analytic in a disk of radius $R'>R$ centered at the origin. Thus, for any fixed $\epsilon>0$, the coefficients of its expansion at the origin cannot grow faster than
\[
\left(\frac{1}{R'} + \epsilon \right)^n
\]
for sufficiently large $n$, and hence
\[
[z^n]H(z) = [z^n]S(z) + O\left( \left(\frac{1}{R'} + \epsilon \right)^n \right).
\]
This is especially good when $R'>1$: In this case we obtain that $[z^n]S(z)$ is a precise asymptotic for $h_n$. (In particular, we obtain that $[z^n]H(z)=[z^n]S(z) + O(A^n)$ for some number $A$ with $0<A<1$.) We remark that even when it is not the case that $R'>1,$ one can still use this formulation to obtain asymptotics---for many examples, see \cite{AC,GF}.

Recall that $G(z)=\sum_{a=1}^k z^a$, $\phi$ denotes the unique positive real solution to
$$
\frac{1}{\phi^{1}} + \frac{1}{\phi^{2}} + \cdots + \frac{1}{\phi^{k}} = 1,
$$
and $\sigma=1/\phi$. Thus $G(\sigma)=1$, so $\sigma$ is a singularity of $F(z)$ and hence of all of our generating functions. In fact $\sigma$ is the dominant singularity of all of our generating functions, and we also have $R'>1$ for all of our generating functions:

\begin{lem}
\label{LemKbonPrecise}
The only number $z\in \C$ for which $G(z)=1$ and $|z|\leq 1$ is $z=\sigma$.
\end{lem}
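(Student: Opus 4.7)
My plan is to proceed by cases on $r := |z|$. The easy cases $r \le \sigma$ are handled by the triangle inequality $|G(z)| \le \sum_{a=1}^k |z|^a = G(r)$ together with the strict monotonicity of $G$ on $[0,\infty)$. If $r < \sigma$, then $|G(z)| \le G(r) < G(\sigma) = 1$, contradicting $G(z) = 1$. If $r = \sigma$, then $|G(z)| = 1$ forces equality in the triangle inequality, so $z, z^2, \ldots, z^k$ all share a common argument; comparing $z$ with $z^2$ yields $\arg(z) \equiv 0 \pmod{2\pi}$, so $z$ is a positive real number and $z = \sigma$.

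The main obstacle is the remaining case $\sigma < r \le 1$, where the triangle inequality gives no useful information since $G(r) > 1$. The key is an algebraic reformulation: since $G(1) = k \neq 1$ (we have $k \ge 2$), one has $z \ne 1$, and the identity $(1-z)G(z) = z - z^{k+1}$ turns $G(z) = 1$ into the equivalent equation
\[
z^{k+1} = 2z - 1.
\]
Taking moduli and using the elementary bound $|2z - 1| \ge \bigl|2|z| - 1\bigr| = 2r - 1$ (the last equality uses $r > 1/2$, which follows from $G(1/2) = 1 - 2^{-k} < 1 = G(\sigma)$ combined with monotonicity of $G$), this implies $h(r) \ge 0$, where $h(r) := r^{k+1} - 2r + 1$.

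I would finish with a convexity argument. One checks $h(\sigma) = 0$ (by applying the same algebraic manipulation to $G(\sigma) = 1$) and $h(1) = 0$ (by inspection), and $h$ is strictly convex on $[0,\infty)$ since $h''(r) = k(k+1) r^{k-1} > 0$ for $r > 0$. Strict convexity together with these two roots forces $h(r) < 0$ throughout the open interval $(\sigma, 1)$, contradicting $h(r) \ge 0$ whenever $r$ lies strictly between $\sigma$ and $1$. For the boundary $r = 1$, equality in $|2z - 1| \ge 2|z| - 1$ must hold, which requires $\mathrm{Re}(z) = |z|$ and hence $z = 1$; but this contradicts $G(1) = k \ne 1$. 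Thus $z = \sigma$ is the unique solution.
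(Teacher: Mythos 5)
Your proof is correct, and it takes a genuinely different route from the paper's. The paper disposes of this lemma in three lines by citing Miles's theorem that $M(z)=z^k-z^{k-1}-\cdots-z-1$ has exactly one root of modulus greater than one and $k-1$ roots of modulus strictly less than one, and then passing to reciprocals. Your argument is self-contained and elementary: the triangle inequality handles $|z|\le\sigma$ (with the equality case at $|z|=\sigma$ forcing $z$, $z^2$ to be positively proportional, hence $z=\sigma$), and the identity $(1-z)G(z)=z-z^{k+1}$ converts $G(z)=1$ into $z^{k+1}=2z-1$, whose modulus consequence $h(r)=r^{k+1}-2r+1\ge 0$ is incompatible with $\sigma<r<1$ because $h$ is strictly convex with roots at $\sigma$ and $1$; the boundary case $r=1$ is eliminated by the equality condition in $|2z-1|\ge 2|z|-1$, which forces $z=1$ and contradicts $G(1)=k\neq 1$. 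All the supporting facts check out: $\sigma>1/2$ from $G(1/2)=1-2^{-k}<1$, $\sigma<1$ from $G(1)=k>1$ (the paper assumes $k\ge 2$ in this section), and $h(\sigma)=0$ from applying the same algebraic identity to $\sigma$. What you lose relative to Miles is the extra information that the remaining $k-1$ roots are distinct; but nothing in the paper needs that, and the subsequent conclusion $R'>1$ still follows from your version since $G(z)-1$ is a polynomial with finitely many roots. What you gain is independence from the literature on the $k$-bonacci characteristic polynomial.
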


\begin{proof}
Since we have assumed $k\geq 2$, we have $\phi>1$, so $0<\sigma<1$. Miles \cite{Miles} showed that $M(z)=z^k-z^{k-1}-\cdots-z-1$ has one root of modulus greater than one, and $k-1$ (distinct) roots of modulus strictly smaller than one. It is straightforward to verify that $r\in \C$ is a root of $M(z)$ if and only if $1/r$ is a root of $G(z)-1$, which in turn yields the stated result.
\end{proof}

Thus, the expansion of our generating function $F(z)$ at $z=\sigma$ will play a role in all of our asymptotic derivations. 

\begin{lem}
\label{LemMainExpansion}
The Laurent expansion of
\[
F(z)=\frac{1}{1-G(z)}
\]
at $z=\sigma$ is
\[
F(z)=\frac{-1}{G'(\sigma)(z-\sigma)} + \frac{G''(\sigma)}{2G'(\sigma)^2} + l_1(z-\sigma) + l_2(z-\sigma)^2+\cdots
\]
for some (unimportant) complex coefficients $l_1,l_2,\ldots$.
\end{lem}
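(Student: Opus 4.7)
The plan is to Taylor expand the denominator $1-G(z)$ about $z=\sigma$ and then invert it using a geometric series. Since $G$ is a polynomial with $G(\sigma)=1$, its Taylor expansion about $\sigma$ takes the form
\[
1-G(z) = -G'(\sigma)(z-\sigma) - \frac{G''(\sigma)}{2}(z-\sigma)^2 - \sum_{j\geq 3}\frac{G^{(j)}(\sigma)}{j!}(z-\sigma)^j.
\]
The leading coefficient $G'(\sigma)=\sum_{a=1}^k a\sigma^{a-1}$ is strictly positive (since $\sigma>0$), so $1-G(z)$ has a simple zero at $\sigma$, and $F(z)=1/(1-G(z))$ therefore has a simple pole there.

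Next I would factor out the leading term to write
\[
1-G(z) = -G'(\sigma)(z-\sigma)\bigl[1+h(z)\bigr],
\]
where
\[
h(z) = \frac{G''(\sigma)}{2G'(\sigma)}(z-\sigma) + \sum_{j\geq 2} c_j(z-\sigma)^j
\]
is analytic at $\sigma$ and satisfies $h(\sigma)=0$. Since $|h(z)|<1$ in a small neighborhood of $\sigma$, the geometric series
\[
\frac{1}{1+h(z)} = 1 - h(z) + h(z)^2 - \cdots = 1 - \frac{G''(\sigma)}{2G'(\sigma)}(z-\sigma) + O\bigl((z-\sigma)^2\bigr)
\]
converges and represents an analytic function there.

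Multiplying through by $-1/(G'(\sigma)(z-\sigma))$ then gives
\[
F(z) = \frac{-1}{G'(\sigma)(z-\sigma)}\left[1 - \frac{G''(\sigma)}{2G'(\sigma)}(z-\sigma) + O\bigl((z-\sigma)^2\bigr)\right]
= \frac{-1}{G'(\sigma)(z-\sigma)} + \frac{G''(\sigma)}{2G'(\sigma)^2} + O(z-\sigma),
\]
which is the claimed expansion, with the unspecified coefficients $l_1,l_2,\ldots$ being the higher-order terms of this product. There is no real obstacle here; the main point is simply to verify $G'(\sigma)\neq 0$ to justify that the pole is simple, after which the computation reduces to routine manipulation of two series.
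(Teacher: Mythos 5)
Your proof is correct, and it reaches the two needed coefficients by a somewhat different (and arguably cleaner) route than the paper. The paper establishes that $\sigma$ is a simple root of $1-G(z)$ via Descartes' rule of signs, writes $1-G(z)=(z-\sigma)g(z)$ for a polynomial $g$, and then extracts the residue $a=1/g(\sigma)$ and the constant term $b=-g'(\sigma)/g(\sigma)^2$ by computing $g(\sigma)=-G'(\sigma)$ and $g'(\sigma)=-G''(\sigma)/2$, each via l'Hospital's rule applied to the defining identity. You instead Taylor-expand the polynomial $G$ directly at $\sigma$, read off that the zero is simple from $G'(\sigma)=\sum_{a=1}^k a\sigma^{a-1}>0$, and invert the factor $1+h(z)$ with a geometric series; the coefficients $-1/G'(\sigma)$ and $G''(\sigma)/(2G'(\sigma)^2)$ then fall out of multiplying two explicit series. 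The content is the same --- in effect the paper's l'Hospital computations are recovering exactly the Taylor coefficients you write down at the outset --- but your version avoids the auxiliary polynomial $g$ and the two limit computations, at the small cost of having to justify (as you do) that the geometric series for $1/(1+h(z))$ converges near $\sigma$. Either argument is complete; no gaps.
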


\begin{proof}Descartes' rule of signs implies that $z=\sigma$ has multiplicity one as a root of $1-G(z)$. 
Thus the desired expansion is 
\[
\frac{a}{z-\sigma} + b + l_1(z-\sigma) + l_2(z-\sigma)^2+\cdots
\]
for some constants $a,b,l_1,l_2,\ldots$. We need to establish the values of $a$ and $b$. Write $1-G(z)=(z-\sigma)g(z)$ for some polynomial $g(z)$ of which $\sigma$ is not a root. First, $a$ is the residue of $1/(1-G(z))$ at $z=\sigma$, so $a=1/g(\sigma)$. Note that, since $\sigma$ is a positive real number, the coefficients of $g(z)$ are all real numbers. To compute $a$, then, we use the continuity of $g$ along the real line and l'Hospital's rule to obtain 
\begin{equation}
\label{eqgandGprimeAtSigma}
g(\sigma)=\lim_{x\rightarrow \sigma} g(x) = \lim_{x\rightarrow\sigma} \frac{g(x)(x-\sigma)}{x-\sigma} =
\lim_{x\rightarrow\sigma}\frac{1-G(x)}{x-\sigma}=
\lim_{x\rightarrow\sigma}\frac{-G'(x)}{1}=
-G'(\sigma),
\end{equation}
and hence $a=-1/G'(\sigma)$. 

Next, we have that
\[
\frac{1}{g(z)} = \frac{(z-\sigma)}{(z-\sigma)g(z)}= \frac{(z-\sigma)}{1-G(z)} = a+b(z-\sigma) + l_1(z-\sigma)^2+l_2(z-\sigma)^3+\cdots
\]
is analytic in a disk centered at $z=\sigma$, so 
\[
b=\frac{d}{dz}\left( \frac{1}{g(z)} \right)\bigg|_{z=\sigma} = -\frac{g'(\sigma)}{g(\sigma)^2}.
\]
We note that differentiating the equation $g(x)(x-\sigma)=1-G(x)$ and performing a little algebra yields
\[
g'(x)(x-\sigma) = -G'(x)-g(x).
\]
To compute $g'(\sigma)$ we use the continuity of $g'$ along the real line and l'Hospital's rule again:
\begin{align*}
g'(\sigma)=
\lim_{x\rightarrow\sigma}g'(x)=
\lim_{x\rightarrow\sigma}\frac{g'(x)(x-\sigma)}{x-\sigma} = 
\lim_{x\rightarrow\sigma}\frac{-G'(x)-g(x)}{x-\sigma}&=
\lim_{x\rightarrow\sigma}\frac{-G''(x)-g'(x)}{1}\\
&=-G''(\sigma)-g'(\sigma).
\end{align*}
(The use of l'Hospital's rule here is justified by (\ref{eqgandGprimeAtSigma}), which says that $\lim_{x\rightarrow\sigma}-G'(x)-g(x)=-G'(\sigma)-g(\sigma)=0$.) Finally, solving this equation for $g'(\sigma)$ yields
\[
g'(\sigma) = \frac{-G''(\sigma)}{2},
\]
and hence
\[
b=\frac{G''(\sigma)}{2G'(\sigma)^2},
\]
as claimed.
\end{proof}

Finally, we will also need the straightforward facts that, expanding at the origin, we have
\begin{equation}
\label{eqExpansion1}
\frac{1}{z-\sigma} = -\left(\frac{1}{\sigma} + \frac{z}{\sigma^2} + \frac{z^2}{\sigma^3} + \cdots \right),
\end{equation}
and
\begin{equation}
\label{eqExpansion2}
\frac{1}{(z-\sigma)^2} = \frac{1}{\sigma^2} + \frac{2z}{\sigma^3} + \frac{3z^2}{\sigma^4} + \cdots.
\end{equation}

\subsection{Asymptotic formulas}
\label{SecAsymptoticDerivations}

We are now ready to establish our asymptotic formulas. We begin with $F_n$. The asymptotic for $F_n$ stated here is equivalent to the one in \cite{Flores}, where it was derived using the theory of recurrence relations whose characteristic polynomials have no repeated roots. Here we offer an alternate derivation.

\begin{thm}
\label{thmFnAsymptotic}We have 
\[
F_n = \frac{\phi^{n+1}}{G'(\sigma)} + O(A^n)
\]
for some number $A$ with $0<A<1$. 
\end{thm}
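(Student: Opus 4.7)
The plan is to apply the general dominant-singularity principle described in Section \ref{SecAsymptoticPreliminaries} directly to $F(z)=1/(1-G(z))$, using the pieces that have already been assembled. By Theorem \ref{ThmFnGen}, $F(z)$ is a rational function whose singularities are exactly the zeros of $1-G(z)$. Lemma \ref{LemKbonPrecise} tells us that on the closed unit disk the only such zero is $z=\sigma$, so every other singularity of $F(z)$ lies strictly outside the unit circle. In particular, $\sigma$ is the unique dominant singularity, and we may choose a radius $R'$ with $1<R'<|z'|$, where $z'$ is any second-closest singularity of $F(z)$.

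Next, I would isolate the principal part of $F(z)$ at $\sigma$. By Lemma \ref{LemMainExpansion}, this principal part is
\[
S(z)=\frac{-1}{G'(\sigma)(z-\sigma)},
\]
and the difference $F(z)-S(z)$ extends to a function that is analytic on the open disk $\{z\in\C:|z|<R'\}$. Applying the basic growth bound from Section \ref{SecAsymptoticPreliminaries} (Theorem 2.4.3 of \cite{GF}), for any $\epsilon>0$ small enough that $1/R'+\epsilon<1$ we get
\[
[z^n]\bigl(F(z)-S(z)\bigr)=O(A^n),\qquad A=\tfrac{1}{R'}+\epsilon\in(0,1).
\]

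Finally, I would read off $[z^n]S(z)$ using expansion (\ref{eqExpansion1}):
\[
[z^n]S(z)=\frac{-1}{G'(\sigma)}\cdot\left(-\frac{1}{\sigma^{n+1}}\right)=\frac{1}{G'(\sigma)\sigma^{n+1}}=\frac{\phi^{n+1}}{G'(\sigma)},
\]
since $\sigma=1/\phi$. Combining these two facts gives
\[
F_n=[z^n]F(z)=[z^n]S(z)+[z^n]\bigl(F(z)-S(z)\bigr)=\frac{\phi^{n+1}}{G'(\sigma)}+O(A^n),
\]
which is the claim.

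The only place that requires genuine content is the assertion that all singularities other than $\sigma$ lie strictly outside the unit circle, so that the remainder is genuinely $O(A^n)$ with $A<1$ rather than merely $O(\phi^n)$. That step, however, is exactly Lemma \ref{LemKbonPrecise}, which has already been proved via Miles's theorem; everything else is a routine extraction of the coefficient of the pole term and an application of the growth bound from analytic combinatorics.
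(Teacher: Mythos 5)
Your argument is correct and is essentially identical to the paper's own proof: both isolate the principal part $S(z)=-1/(G'(\sigma)(z-\sigma))$ via Lemma \ref{LemMainExpansion}, read off $[z^n]S(z)=\phi^{n+1}/G'(\sigma)$ from expansion (\ref{eqExpansion1}), and invoke Lemma \ref{LemKbonPrecise} to guarantee $R'>1$ so the remainder is $O(A^n)$ with $0<A<1$. No gaps.
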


\begin{proof}By Lemma \ref{LemMainExpansion}, the expansion of the generating function $F(z)=1/(1-G(z))$ at its dominant singularity, $z=\sigma$, is
\[
-\frac{1}{G'(\sigma)(z-\sigma)} + J(z-\sigma)
\]
where $J(z-\sigma)$ consists of $(z-\sigma)^0$ and higher-order terms. 
The expansion of the principal part of this function,
\[
\frac{-1}{G'(\sigma)(z-\sigma)},
\]
at the origin is, by Equation (\ref{eqExpansion1}),
\[
S(z)=\frac{1}{G'(\sigma)}\left(\frac{1}{\sigma} + \frac{z}{\sigma^2} + \frac{z^2}{\sigma^3}  + \cdots \right).
\]
Thus
\[
[z^n]S(z) = \frac{1}{\sigma^{n+1}G'(\sigma)}
\]
and so
\[
F_n=[z^n]F(z) = \frac{1}{\sigma^{n+1}G'(\sigma)} + O\left(\left(\frac{1}{R'}+\epsilon\right)^n\right)
\]
for any $\epsilon>0$, where $R'$ is the distance from the origin to the singularity of $F(z)$, other than $\sigma$, closest to the origin. Lemma \ref{LemKbonPrecise} implies that $R'>1$, so $\epsilon>0$ can be chosen for which $0<(1/R'+\epsilon)<1$. Since $\phi=1/\sigma$, this establishes our result with $A=1/R'+\epsilon$.
\end{proof}

Next we turn to $T_n$. 

\begin{thm}
\label{thmTnAsymptotic}
We have
\[
T_n = \frac{\phi^{n+2}}{G'(\sigma)^2}(n+1) + \frac{\phi^{n+1}G''(\sigma)}{G'(\sigma)^3} - \frac{\phi^{n+1}}{G'(\sigma)}
+O(A^n)
\]
for some number $A$ with $0<A<1$.
\end{thm}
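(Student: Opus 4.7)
The plan is to mirror the proof of Theorem \ref{thmFnAsymptotic}, applied to the generating function
\[
T(z) = \frac{1}{(1-G(z))^2} - \frac{1}{1-G(z)} = F(z)^2 - F(z)
\]
from Theorem \ref{ThmTnGen}. By Lemma \ref{LemKbonPrecise}, the only singularity of $F(z)$, and hence of $T(z)$, on or inside the unit circle is $z=\sigma$. Thus $T(z)$ minus its principal part $S(z)$ at $\sigma$ is analytic in some disk of radius $R'>1$, and the coefficient-growth bound used in the proof of Theorem \ref{thmFnAsymptotic} will give $T_n = [z^n]T(z) = [z^n]S(z) + O(A^n)$ for any $A$ with $1/R' < A < 1$. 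The whole task therefore reduces to identifying $S(z)$ and extracting its $n$th coefficient at the origin.

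To identify $S(z)$, write $a = -1/G'(\sigma)$ and $b = G''(\sigma)/(2G'(\sigma)^2)$, so that Lemma \ref{LemMainExpansion} reads $F(z) = a/(z-\sigma) + b + O(z-\sigma)$ near $z=\sigma$. Squaring this Laurent expansion gives
\[
F(z)^2 = \frac{a^2}{(z-\sigma)^2} + \frac{2ab}{z-\sigma} + O(1),
\]
so the principal part of $T(z) = F(z)^2 - F(z)$ at $z=\sigma$ is
\[
S(z) = \frac{a^2}{(z-\sigma)^2} + \frac{2ab - a}{z-\sigma} = \frac{1}{G'(\sigma)^2(z-\sigma)^2} + \frac{1}{z-\sigma}\left(\frac{1}{G'(\sigma)} - \frac{G''(\sigma)}{G'(\sigma)^3}\right).
\]

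The final step is to re-expand $S(z)$ at the origin using equations (\ref{eqExpansion1}) and (\ref{eqExpansion2}) together with $\phi = 1/\sigma$. The $1/(z-\sigma)^2$ contribution produces the leading $\phi^{n+2}(n+1)/G'(\sigma)^2$ term; the $1/(z-\sigma)$ contribution acquires a sign flip from (\ref{eqExpansion1}), converting the bracketed coefficient into $\phi^{n+1}G''(\sigma)/G'(\sigma)^3 - \phi^{n+1}/G'(\sigma)$. Summing reproduces the claimed formula exactly. I do not anticipate any real obstacle here; the only mildly delicate part is keeping the signs and the powers of $G'(\sigma)$ straight when squaring the Laurent expansion of $F$ and then subtracting $F$, after which the rest is purely mechanical.
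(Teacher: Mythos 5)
Your proposal is correct and follows essentially the same route as the paper: the paper likewise takes the principal part of $T(z)=1/(1-G(z))^2-1/(1-G(z))$ at $z=\sigma$ via Lemma \ref{LemMainExpansion} (your explicit squaring of $a/(z-\sigma)+b+\cdots$ reproduces exactly the paper's stated coefficient $(G'(\sigma)^2-G''(\sigma))/G'(\sigma)^3$ of $1/(z-\sigma)$), re-expands at the origin with (\ref{eqExpansion1})--(\ref{eqExpansion2}), and controls the error using Lemma \ref{LemKbonPrecise}. All signs and powers of $G'(\sigma)$ in your computation check out.
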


Since $A_n=T_n/F_n$, if one accepts as given the asymptotic for $A_n$ from Theorem \ref{ThmMyResults}, one can immediately deduce this asymptotic for $T_n$ by multiplying the asymptotic formula for $F_n$ in Theorem \ref{thmFnAsymptotic} by the one for $A_n$ in Theorem \ref{ThmMyResults}. We will also give our own proof of the asymptotic for $A_n$ below which is based on our asymptotic for $T_n$, so we also offer the following direct derivation of Theorem \ref{thmTnAsymptotic}.

\begin{proof}[Proof of Theorem \ref{thmTnAsymptotic}]
By Lemma \ref{LemMainExpansion}, we have that the expansion of the generating function $T(z)={1}/{(1-G(z))^2} - {1}/{(1-G(z))}$ at its dominant singularity, $z=\sigma$, is 
\[
\frac{1}{G'(\sigma)^2 (z-\sigma)^2} + \frac{G'(\sigma)^2-G''(\sigma)}{G'(\sigma)^3(z-\sigma)} + J(z-\sigma),
\]
where $J(z-\sigma)$ consists of $(z-\sigma)^0$ and higher-order terms.
The $z^n$ term of the expansion of the principal part of this function at the origin is, by Equations (\ref{eqExpansion1}) and (\ref{eqExpansion2}),
\[
\frac{(n+1)}{G'(\sigma)^2\sigma^{n+2}} + \frac{G''(\sigma) - G'(\sigma)^2}{G'(\sigma)^3\sigma^{n+1}},
\]
so
\[
T_n = \frac{(n+1)}{G'(\sigma)^2\sigma^{n+2}} + \frac{G''(\sigma) - G'(\sigma)^2}{G'(\sigma)^3\sigma^{n+1}} + O\left(\left(\frac{1}{R'}+\epsilon\right)^n\right)
\]
for any $\epsilon>0$, where $R'$ is the distance from the origin to the singularity of $1/(1-G(z))$, other than $\sigma$, closest to the origin. Again, Lemma \ref{LemKbonPrecise} implies that $R'>1$, so $\epsilon>0$ can be chosen for which $0<(1/R'+\epsilon)<1$, establishing our result with $A=1/R'+\epsilon$.
\end{proof}

Next we turn to $A_n$, whose asymptotic here was proved in a more general setting in \cite[Theorem V.1]{AC}. Our proof is similar to theirs and foreshadows our proof of Theorem \ref{thmAjnAsymptotic}.

\begin{thm}
\label{thmAnAsymptotic}
We have
\[
A_n = \frac{\phi}{G'(\sigma)}(n+1) - 1 + \frac{G''(\sigma)}{G'(\sigma)^2} +O(A^n)
\]
for some number $A$ with $0<A<1$.
\end{thm}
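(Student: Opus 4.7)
The plan is to derive the asymptotic for $A_n$ directly from $A_n = T_n/F_n$, using the asymptotic expressions for $T_n$ and $F_n$ already established in Theorems \ref{thmFnAsymptotic} and \ref{thmTnAsymptotic}. Denote by $X_n$ the three-term principal approximation to $T_n$, so that $T_n = X_n + O(A_2^n)$ and $F_n = \phi^{n+1}/G'(\sigma) + O(A_1^n)$ for some $0<A_1,A_2<1$. The essential point is that $F_n$ grows at the exponential rate $\phi^n$ with $\phi>1$, so the additive error in the asymptotic for $F_n$ is negligible compared to $F_n$ itself at an exponential rate.

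First I would factor out the leading term of $F_n$, writing
\[
F_n = \frac{\phi^{n+1}}{G'(\sigma)}\bigl(1 + O(B_1^n)\bigr)
\]
for some $B_1 \in (0,1)$ (e.g.\ any number with $A_1/\phi < B_1 < 1$). Next I would form the ratio
\[
A_n = \frac{T_n}{F_n} = \frac{G'(\sigma)}{\phi^{n+1}}\cdot\frac{X_n + O(A_2^n)}{1 + O(B_1^n)},
\]
invert the denominator as a geometric series (valid for $n$ sufficiently large), and expand. A short algebraic simplification then shows that the main term is
\[
\frac{G'(\sigma)\, X_n}{\phi^{n+1}} = \frac{\phi(n+1)}{G'(\sigma)} + \frac{G''(\sigma)}{G'(\sigma)^2} - 1,
\]
which matches the claimed formula.

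The final step is bookkeeping on the error terms. The additive error $O(A_2^n)$ divided by $F_n$ produces an error of order $O((A_2/\phi)^n)$, which is $O(B_2^n)$ for some $B_2 < 1$. The cross-term coming from the main part of $T_n/F_n$ times $O(B_1^n)$ has the form $O(n\, B_1^n)$, because $X_n\cdot G'(\sigma)/\phi^{n+1}$ is linear in $n$; this can be absorbed into $O(B^n)$ for any $B \in (B_1,1)$. Taking $A$ to be the maximum of these rates (or any number in $(\max\{B_1,B_2\},1)$) collects everything into the single error term $O(A^n)$ with $0<A<1$.

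The main (minor) obstacle is the error bookkeeping: ensuring that polynomial-in-$n$ factors appearing in $X_n$ and in the cross-terms are safely absorbed into a single clean exponential error, and that the geometric-series inversion of $1+O(B_1^n)$ is justified for all sufficiently large $n$. Neither issue is substantive, so the proof will be short.
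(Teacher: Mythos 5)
Your proposal is correct and follows essentially the same route as the paper's own proof: both form $A_n = T_n/F_n$, factor the leading term out of $F_n$, invert the $1+O(B_1^n)$ factor, and absorb the resulting $O(nB_1^n)$ cross-term into a single exponential error via $nO(B_1^n)=O(B^n)$ for any $B\in(B_1,1)$. No substantive differences.
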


\begin{proof}
From Theorems \ref{thmFnAsymptotic} and \ref{thmTnAsymptotic} we have
\[
F_n = \frac{\phi^{n+1}}{G'(\sigma)} + E(n) =
\frac{\phi^{n+1}}{G'(\sigma)}\left(1+\frac{E(n)G'(\sigma)}{\phi^{n+1}} \right)
\]
and
\[
T_n = \frac{\phi^{n+2}}{G'(\sigma)^2}(n+1) + \frac{\phi^{n+1}G''(\sigma)}{G'(\sigma)^3} - \frac{\phi^{n+1}}{G'(\sigma)}
+E_2(n)
\]
for some functions $E$ and $E_2$ with $E(n)=O(A^n)$ and $E_2(n)=O(A^n)$ for some constant $A$ with $0<A<1$. Let $B$ be any constant with $A<B<1$. Then
\begin{align*}
A_n = \frac{T_n}{F_n} &= 
\left( \frac{\phi}{G'(\sigma)}(n+1) + \frac{G''(\sigma)}{G'(\sigma)^2} - 1 + \frac{E_2(n)G'(\sigma)}{\phi^{n+1}}\right)
\left(\frac{1}{1+E(n)G'(\sigma)/\phi^{n+1}} \right)
\\
&=
\left( \frac{\phi}{G'(\sigma)}(n+1) + \frac{G''(\sigma)}{G'(\sigma)^2} - 1 + E_3(n) \right)
\left(\frac{1}{1+E_4(n)}\right)
\end{align*}
for
\[
E_3(n)= \frac{E_2(n)G'(\sigma)}{\phi^{n+1}}
\]
and
\[
E_4(n)=\frac{E(n)G'(\sigma)}{\phi^{n+1}}.
\]
since $\sigma$ is constant and $\phi>1$, it is immediate that $E_3(n)=O(A^n)$ and $E_4=O(A^n)$. It is therefore also straightforward to obtain 
\[
\frac{1}{1+E_4(n)} = 1+O(A^n).
\]
Thus, we have
\begin{align*}
A_n &= \left( \frac{\phi}{G'(\sigma)}(n+1) + \frac{G''(\sigma)}{G'(\sigma)^2} - 1 + O(A^n) \right)
\left(1+O(A^n)\right).
\end{align*}
Multiplying this out and using $nO(A^n) = O(B^n)$ along with the fact that $G'(\sigma)>0$, $G''(\sigma)>0$, and $\phi>0$ are constants, we obtain
\[
A_n = 
\frac{\phi}{G'(\sigma)}(n+1) - 1 + \frac{G''(\sigma)}{G'(\sigma)^2} + 
O(B^n)
\]
Since $0<B<1$, we are done.
\end{proof}

Next we turn to $C_{n,j}$. 

\begin{thm}
\label{thmCjnAsymptotic}Let $j\in\onetok$. Then we have
\[
C_{n,j} =\frac{\phi^{n+2-j}}{G'(\sigma)^2}(n+1-j) + \frac{\phi^{n+1-j}G''(\sigma)}{G'(\sigma)^3}
+O(A^n)
\]
for some number $A$ with $0<A<1$.
\end{thm}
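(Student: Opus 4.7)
The plan is to follow the same analytic combinatorics template used in Theorems \ref{thmFnAsymptotic} and \ref{thmTnAsymptotic}: identify the dominant singularity of $C_j(z)$, extract its principal part, expand that principal part at the origin, and control the error using Lemma \ref{LemKbonPrecise}. By Theorem \ref{ThmCjnGen}, $C_j(z)=z^j/(1-G(z))^2$, and since $z^j$ is entire, the singularities of $C_j(z)$ are exactly the singularities of $1/(1-G(z))^2$, which by Lemma \ref{LemKbonPrecise} has $z=\sigma$ as its unique singularity of modulus at most $1$, with every other singularity at distance $R'>1$ from the origin.

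To extract the principal part at $z=\sigma$, first square the Laurent expansion from Lemma \ref{LemMainExpansion}. Writing $F(z)=a/(z-\sigma)+b+O(z-\sigma)$ with $a=-1/G'(\sigma)$ and $b=G''(\sigma)/(2G'(\sigma)^2)$, one obtains
\[
\frac{1}{(1-G(z))^2}=\frac{1}{G'(\sigma)^2(z-\sigma)^2}-\frac{G''(\sigma)}{G'(\sigma)^3(z-\sigma)}+ \text{(analytic at }\sigma\text{)}.
\]
Next, expand $z^j$ in a Taylor series about $\sigma$: $z^j=\sigma^j+j\sigma^{j-1}(z-\sigma)+O((z-\sigma)^2)$. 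Multiplying the two expansions, the principal part of $C_j(z)$ at $\sigma$ is
\[
S(z)=\frac{\sigma^j}{G'(\sigma)^2(z-\sigma)^2}+\frac{1}{z-\sigma}\left(\frac{j\sigma^{j-1}}{G'(\sigma)^2}-\frac{\sigma^j G''(\sigma)}{G'(\sigma)^3}\right),
\]
where the coefficient of $1/(z-\sigma)$ picks up contributions both from the $(z-\sigma)^0$ term of $z^j$ against the simple-pole term of $F^2$ and from the $(z-\sigma)^1$ term of $z^j$ against the double-pole term of $F^2$.

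Then I apply expansions (\ref{eqExpansion1}) and (\ref{eqExpansion2}) at the origin to read off $[z^n]S(z)$. A short computation (collecting the $\sigma^{-(n+2-j)}$ and $\sigma^{-(n+1-j)}$ contributions) gives
\[
[z^n]S(z)=\frac{(n+1-j)}{G'(\sigma)^2\,\sigma^{n+2-j}}+\frac{G''(\sigma)}{G'(\sigma)^3\,\sigma^{n+1-j}},
\]
which, using $\phi=1/\sigma$, is exactly the main term claimed in the theorem. Finally, because $C_j(z)-S(z)$ is analytic in $|z|<R'$ with $R'>1$ by Lemma \ref{LemKbonPrecise}, the basic coefficient bound gives $[z^n](C_j(z)-S(z))=O((1/R'+\epsilon)^n)$, and choosing $\epsilon>0$ small enough yields $A=1/R'+\epsilon\in(0,1)$, completing the proof.

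The only nontrivial point is the bookkeeping in step two: one must not forget that multiplying the double pole $1/(z-\sigma)^2$ by the linear term $j\sigma^{j-1}(z-\sigma)$ of the Taylor expansion of $z^j$ contributes to the simple-pole coefficient, and this contribution is precisely what produces the $-j$ shift that changes $n+1$ into $n+1-j$ in the leading term. Aside from this, everything is parallel to the proof of Theorem \ref{thmTnAsymptotic}.
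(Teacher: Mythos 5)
Your proof is correct and follows the same singularity-analysis template as the paper; the one place you genuinely diverge is in how the factor $z^j$ is handled. The paper simply observes that $C_{n,j}=[z^n]\,z^j/(1-G(z))^2=[z^{n-j}]\,1/(1-G(z))^2$, expands $1/(1-G(z))^2$ at $\sigma$ once, reads off the $z^n$ coefficient of its principal part, and then substitutes $n-j$ for $n$; you instead Taylor-expand $z^j$ about $\sigma$ and multiply series, so the $-j$ shift emerges from the cross term $j\sigma^{j-1}(z-\sigma)\cdot\frac{1}{G'(\sigma)^2(z-\sigma)^2}$ rather than from an index substitution. Your bookkeeping is right --- your principal part and your $[z^n]S(z)=\frac{(n+1-j)}{G'(\sigma)^2\sigma^{n+2-j}}+\frac{G''(\sigma)}{G'(\sigma)^3\sigma^{n+1-j}}$ agree exactly with the paper's, and your error control via Lemma \ref{LemKbonPrecise} is the same --- but the coefficient-shift trick eliminates precisely the step you flag as the ``only nontrivial point,'' so the paper's route is the cleaner of the two.
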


\begin{proof}We have 
\[
C_{n,j}=[x^n] \frac{x^j}{(1-G(x))^2} = [x^{n-j}] \frac{1}{(1-G(x))^2}.
\]
Lemma \ref{LemMainExpansion} implies that the expansion of $1/{(1-G(z))^2}$ at its dominant singularity, $z=\sigma$, is
\[
\frac{1}{G'(\sigma)^2 (z-\sigma)^2} - \frac{G''(\sigma)}{G'(\sigma)^3(z-\sigma)} + J(z-\sigma),
\]
where $J(z-\sigma)$ consists of $(z-\sigma)^0$ and higher-order terms. The $z^n$ term of the expansion of the principal part of this function at the origin is, by Equations (\ref{eqExpansion1}) and (\ref{eqExpansion2}),
\[
\frac{(n+1)}{G'(\sigma)^2\sigma^{n+2}} + \frac{G''(\sigma)}{G'(\sigma)^3\sigma^{n+1}},
\]
and thus
\[
C_{n,j} = \frac{(n+1-j)}{G'(\sigma)^2\sigma^{n+2-j}} + \frac{G''(\sigma)}{G'(\sigma)^3\sigma^{n+1-j}} + 
O\left(\left(\frac{1}{R'}+\epsilon\right)^n\right)
\]
for any $\epsilon>0$, where $R'$ is the distance from the origin to the singularity of $1/(1-G(z))$, other than $\sigma$, closest to the origin. Again, Lemma \ref{LemKbonPrecise} implies that $R'>1$, so $\epsilon>0$ can be chosen for which $0<(1/R'+\epsilon)<1$, establishing our result with $A=1/R'+\epsilon$.
\end{proof}

Finally, we handle $A_{n,j}$.

\begin{thm}
\label{thmAjnAsymptotic}
Let $j\in \onetok$. Then
\[
A_{n,j} = \frac{\phi^{1-j}}{G'(\sigma)}(n+1-j) + \frac{\phi^{-j}G''(\sigma)}{G'(\sigma)^2} + O(A^n)
\]
for some number $A$ with $0<A<1$.
\end{thm}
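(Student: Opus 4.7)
The plan is to combine the two precise asymptotics already in hand, for $C_{n,j}$ (Theorem \ref{thmCjnAsymptotic}) and $F_n$ (Theorem \ref{thmFnAsymptotic}), via the definition $A_{n,j} = C_{n,j}/F_n$, following the same division-and-expand argument used to prove Theorem \ref{thmAnAsymptotic}. Conceptually no new complex-analytic content is needed: everything reduces to manipulating the known exponentially small error terms.

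First I would rewrite $F_n$ and $C_{n,j}$ in factored form, pulling out the common leading factor $\phi^{n+1}/G'(\sigma)$:
\[
F_n = \frac{\phi^{n+1}}{G'(\sigma)}\bigl(1 + E_4(n)\bigr), \qquad C_{n,j} = \frac{\phi^{n+1}}{G'(\sigma)}\, W_{n,j},
\]
where
\[
W_{n,j} = \frac{\phi^{1-j}}{G'(\sigma)}(n+1-j) + \frac{\phi^{-j}G''(\sigma)}{G'(\sigma)^2} + E_3(n),
\]
and $E_3(n), E_4(n) = O(A^n)$ for some $0 < A < 1$ (the extra factor $G'(\sigma)/\phi^{n+1}$ that appears when the leading term is pulled out is absorbed into the rate, which is legal because $\phi > 1$). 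Next I would form the ratio, cancel the common prefactor, and expand $1/(1 + E_4(n)) = 1 + O(A^n)$ exactly as in the proof of Theorem \ref{thmAnAsymptotic}, so that $A_{n,j} = W_{n,j}\bigl(1 + O(A^n)\bigr)$. Multiplying out and collecting terms then yields the two claimed leading terms together with an error.

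The only subtlety, which is also the main point in the proof of Theorem \ref{thmAnAsymptotic}, is that the leading term of $W_{n,j}$ is linear in $n$, so one of the cross terms arising from the multiplication has the form $n \cdot O(A^n)$. This is handled by fixing any $B$ with $A < B < 1$ and using $n \cdot O(A^n) = O(B^n)$. All remaining cross terms are constants in $n$ (for fixed $j$) times $O(A^n)$, hence already $O(A^n) \subseteq O(B^n)$. Since $0 < B < 1$, this gives a precise asymptotic of the stated form. The main obstacle is really just the bookkeeping of several error terms through a division; there is no new idea beyond those already deployed for $A_n$.
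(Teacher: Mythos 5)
Your proposal is correct and follows essentially the same route as the paper's own proof: divide the precise asymptotics for $C_{n,j}$ and $F_n$, pull out the common factor $\phi^{n+1}/G'(\sigma)$, expand $1/(1+E_4(n)) = 1 + O(A^n)$, and absorb the $n\cdot O(A^n)$ cross term via $n\,O(A^n)=O(B^n)$ for any $B$ with $A<B<1$. No gaps.
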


\begin{proof}Our proof is similar to the proof of Theorem \ref{thmAnAsymptotic}.
By Theorems \ref{thmFnAsymptotic} and \ref{thmCjnAsymptotic} we have
\[
F_n = \frac{\phi^{n+1}}{G'(\sigma)} + E(n) = \frac{\phi^{n+1}}{G'(\sigma)}\left(1+\frac{E(n)G'(\sigma)}{\phi^{n+1}} \right)
\]
and
\[
C_{n,j} = \frac{\phi^{n+2-j}}{G'(\sigma)^2}(n+1-j) + \frac{\phi^{n+1-j}G''(\sigma)}{G'(\sigma)^3}
+E_2(n)
\]
for some functions $E$ and $E_2$ with $E(n)=O(A^n)$ and $E_2(n)=O(A^n)$ for some constant $A$ with $0<A<1$. Let $B$ be any constant with $A<B<1$. Then
\begin{align*}
A_{n,j}=\frac{C_{n,j}}{F_n} &= \left(\frac{\phi^{1-j}}{G'(\sigma)}(n+1-j) + \frac{\phi^{-j}G''(\sigma)}{G'(\sigma)^2} + \frac{E_2(n)G'(\sigma)}{\phi^{n+1}}\right)
\left( \frac{1}{1+E(n)G'(\sigma)/\phi^{n+1}} \right)\\
&=\left(\frac{\phi^{1-j}}{G'(\sigma)}(n+1-j) + \frac{\phi^{-j}G''(\sigma)}{G'(\sigma)^2} + E_3(n)\right)
\left( \frac{1}{1+E_4(n)} \right)
\end{align*}
for
\[
E_3(n)= \frac{E_2(n)G'(\sigma)}{\phi^{n+1}}
\]
and
\[
E_4(n)=\frac{E(n)G'(\sigma)}{\phi^{n+1}}.
\]
since $\sigma$ is constant and $\phi>1$, it is immediate that $E_3(n)=O(A^n)$ and $E_4=O(A^n)$. It is therefore also straightforward to obtain 
\[
\frac{1}{1+E_4(n)} = 1+O(A^n).
\]
Thus, we have
\[
A_{n,j}=\left(\frac{\phi^{1-j}}{G'(\sigma)}(n+1-j) + \frac{\phi^{-j}G''(\sigma)}{G'(\sigma)^2} + O(A^n)\right)
\left( 1+O(A^n) \right).
\]
Multiplying this out and using $nO(A^n)=O(B^n)$ along with the fact that $G'(\sigma)>0$, $G''(\sigma)>0$, and $\phi>0$ are constants, we obtain
\[
A_{n,j}=\frac{\phi^{1-j}}{G'(\sigma)}(n+1-j) + \frac{\phi^{-j}G''(\sigma)}{G'(\sigma)^2} + O(B^n).
\]
Since $0<B<1$, we are done.

\end{proof}


\section{Tables for small values of $n$ and $k$}
\label{SecExampleValues}
In this section we give tables of values for $F_n$, $T_n$, $A_n$, $C_{n,j}$, and $A_{n,j}$ and our asymptotic approximations to them for small values of $n$ and $k$. 

First, Table \ref{TableCnjExamples} gives the values of $C_{n,j}$ for $j\in \onetok$, for $k=2,3,4$ and  $n=0$ to $n=20$. As we mentioned after the proof of Theorem \ref{ThmCjnGen}, for a fixed value of $k$, the sequences $\{C_{n,j}\}_{n=0}^\infty$ for $j\in \onetok$ are all just shifts of the same sequence.

\begin{small}
\begin{table}[ht]
\caption{Values of $C_{n,j}$ for $k=2,3,4$}\label{TableCnjExamples}
\begin{minipage}[c]{0.22\linewidth}\centering
\begin{tabular}{|l|ll|}
\hline
\multicolumn{3}{|c|}{$k=2$}\\
\hline
$n$ & $C_{n,1}$ & $C_{n,2}$ \\
\hline
0&0&0\\
1&1&0\\
2&2&1\\
3&5&2\\
4&10&5\\
5&20&10\\
6&38&20\\
7&71&38\\
8&130&71\\
9&235&130\\
10&420&235\\
11&744&420\\
12&1308&744\\
13&2285&1308\\
14&3970&2285\\
15&6865&3970\\
16&11822&6865\\
17&20284&11822\\
18&34690&20284\\
19&59155&34690\\
20&100610&59155\\
\hline
\end{tabular}
\end{minipage}
\begin{minipage}[c]{.33\linewidth}\centering
\begin{tabular}{|l|lll|}
\hline
\multicolumn{4}{|c|}{$k=3$}\\
\hline
$n$ & $C_{n,1}$ & $C_{n,2}$ & $C_{n,3}$ \\
\hline
0&0&0&0\\
1&1&0&0\\
2&2&1&0\\
3&5&2&1\\
4&12&5&2\\
5&26&12&5\\
6&56&26&12\\
7&118&56&26\\
8&244&118&56\\
9&499&244&118\\
10&1010&499&244\\
11&2027&1010&499\\
12&4040&2027&1010\\
13&8004&4040&2027\\
14&15776&8004&4040\\
15&30956&15776&8004\\
16&60504&30956&15776\\
17&117845&60504&30956\\
18&228818&117845&60504\\
19&443057&228818&117845\\
20&855732&443057&228818\\
\hline
\end{tabular}
\end{minipage}
\begin{minipage}[c]{.43\linewidth}\centering
\begin{tabular}{|l|llll|}
\hline
\multicolumn{5}{|c|}{$k=4$}\\
\hline
$n$ & $C_{n,1}$ & $C_{n,2}$ & $C_{n,3}$ & $C_{n,4}$ \\
\hline
0&0&0&0&0\\
1&1&0&0&0\\
2&2&1&0&0\\
3&5&2&1&0\\
4&12&5&2&1\\
5&28&12&5&2\\
6&62&28&12&5\\
7&136&62&28&12\\
8&294&136&62&28\\
9&628&294&136&62\\
10&1328&628&294&136\\
11&2787&1328&628&294\\
12&5810&2787&1328&628\\
13&12043&5810&2787&1328\\
14&24840&12043&5810&2787\\
15&51016&24840&12043&5810\\
16&104380&51016&24840&12043\\
17&212848&104380&51016&24840\\
18&432732&212848&104380&51016\\
19&877400&432732&212848&104380\\
20&1774672&877400&432732&212848\\
\hline
\end{tabular}
\end{minipage}
\end{table}
\end{small}

Next, Tables \ref{Table2}, \ref{Table3}, and \ref{Table4} contain the values of $F_n$, $T_n$, $A_n$, $C_{n,1}$, and $A_{n,1}$ and our asymptotic formula approximations to them for $k=2,3,4$ and $n=0$ to $n=15$. Table \ref{Table2} corresponds to $k=2$, Table \ref{Table3} corresponds to $k=3$, and Table \ref{Table4} corresponds to $k=4$. Entries in these tables are rounded to 3 decimal places. In these tables, the approximating formulas we are using are those given in Theorem \ref{ThmMyResults} without the $O(A^n)$ terms. So, for instance, our approximation to $F_n$ is given by 
\[
F_n \approx \frac{\phi^{n+1}}{G'(\sigma)}.
\]

\begin{table}[!ht]
\caption{Values of $F_n$, $T_n$, $A_n$, $C_{n,1}$, and $A_{n,1}$ for $k=2$; $\phi=(1+\sqrt{5})/2$}\label{Table2}
\begin{tabular}{|l|ll|ll|ll|ll|ll|} 
\hline 
$n$ & $F_n$ & Appr. & $T_n$ & Appr. & $A_n$ & Appr. & $C_{n,1}$ & Appr. & $A_{n,1}$ & Appr. \\
\hline
$0$&$1$&$0.724$&$0$&$0.089$&$0.0$&$0.124$&$0$&$0.179$&$0.0$&$0.247$\\
$1$&$1$&$1.171$&$1$&$0.992$&$1.0$&$0.847$&$1$&$0.813$&$1.0$&$0.694$\\
$2$&$2$&$1.894$&$3$&$2.976$&$1.5$&$1.571$&$2$&$2.163$&$1.0$&$1.142$\\
$3$&$3$&$3.065$&$7$&$7.033$&$2.333$&$2.294$&$5$&$4.87$&$1.667$&$1.589$\\
$4$&$5$&$4.96$&$15$&$14.968$&$3.0$&$3.018$&$10$&$10.098$&$2.0$&$2.036$\\
$5$&$8$&$8.025$&$30$&$30.026$&$3.75$&$3.742$&$20$&$19.928$&$2.5$&$2.483$\\
$6$&$13$&$12.985$&$58$&$57.979$&$4.462$&$4.465$&$38$&$38.051$&$2.923$&$2.93$\\
$7$&$21$&$21.01$&$109$&$109.015$&$5.19$&$5.189$&$71$&$70.964$&$3.381$&$3.378$\\
$8$&$34$&$33.994$&$201$&$200.989$&$5.912$&$5.912$&$130$&$130.025$&$3.824$&$3.825$\\
$9$&$55$&$55.004$&$365$&$365.008$&$6.636$&$6.636$&$235$&$234.983$&$4.273$&$4.272$\\
$10$&$89$&$88.998$&$655$&$654.995$&$7.36$&$7.36$&$420$&$420.012$&$4.719$&$4.719$\\
$11$&$144$&$144.001$&$1164$&$1164.004$&$8.083$&$8.083$&$744$&$743.992$&$5.167$&$5.167$\\
$12$&$233$&$232.999$&$2052$&$2051.997$&$8.807$&$8.807$&$1308$&$1308.005$&$5.614$&$5.614$\\
$13$&$377$&$377.001$&$3593$&$3593.002$&$9.531$&$9.53$&$2285$&$2284.997$&$6.061$&$6.061$\\
$14$&$610$&$610.0$&$6255$&$6254.999$&$10.254$&$10.254$&$3970$&$3970.002$&$6.508$&$6.508$\\
$15$&$987$&$987.0$&$10835$&$10835.001$&$10.978$&$10.978$&$6865$&$6864.999$&$6.955$&$6.955$\\
\hline
\end{tabular}
\end{table}

\begin{table}[ht]
\caption{Values of $F_n$, $T_n$, $A_n$, $C_{n,1}$, and $A_{n,1}$ for $k=3$; $\phi\approx1.8392868$}\label{Table3}
\begin{tabular}{|l|ll|ll|ll|ll|ll|} 
\hline 
$n$ & $F_n$ & Appr. & $T_n$ & Appr. & $A_n$ & Appr. & $C_{n,1}$ & Appr. & $A_{n,1}$ & Appr. \\
\hline
$0$&$1$&$0.618$&$0$&$0.132$&$0.0$&$0.213$&$0$&$0.2$&$0.0$&$0.323$\\
$1$&$1$&$1.137$&$1$&$0.946$&$1.0$&$0.832$&$1$&$0.75$&$1.0$&$0.66$\\
$2$&$2$&$2.092$&$3$&$3.034$&$1.5$&$1.45$&$2$&$2.083$&$1.0$&$0.996$\\
$3$&$4$&$3.848$&$8$&$7.96$&$2.0$&$2.069$&$5$&$5.126$&$1.25$&$1.332$\\
$4$&$7$&$7.078$&$19$&$19.017$&$2.714$&$2.687$&$12$&$11.808$&$1.714$&$1.668$\\
$5$&$13$&$13.018$&$43$&$43.028$&$3.308$&$3.305$&$26$&$26.095$&$2.0$&$2.005$\\
$6$&$24$&$23.943$&$94$&$93.948$&$3.917$&$3.924$&$56$&$56.046$&$2.333$&$2.341$\\
$7$&$44$&$44.038$&$200$&$200.032$&$4.545$&$4.542$&$118$&$117.891$&$2.682$&$2.677$\\
$8$&$81$&$80.999$&$418$&$418.008$&$5.16$&$5.161$&$244$&$244.07$&$3.012$&$3.013$\\
$9$&$149$&$148.98$&$861$&$860.968$&$5.779$&$5.779$&$499$&$499.007$&$3.349$&$3.349$\\
$10$&$274$&$274.017$&$1753$&$1753.025$&$6.398$&$6.397$&$1010$&$1009.948$&$3.686$&$3.686$\\
$11$&$504$&$503.996$&$3536$&$3535.998$&$7.016$&$7.016$&$2027$&$2027.043$&$4.022$&$4.022$\\
$12$&$927$&$926.994$&$7077$&$7076.985$&$7.634$&$7.634$&$4040$&$4039.994$&$4.358$&$4.358$\\
$13$&$1705$&$1705.007$&$14071$&$14071.015$&$8.253$&$8.253$&$8004$&$8003.979$&$4.694$&$4.694$\\
$14$&$3136$&$3135.997$&$27820$&$27819.995$&$8.871$&$8.871$&$15776$&$15776.023$&$5.031$&$5.031$\\
$15$&$5768$&$5767.998$&$54736$&$54735.994$&$9.49$&$9.49$&$30956$&$30955.993$&$5.367$&$5.367$\\
\hline
\end{tabular}  
\end{table}

\begin{table}[ht]
\caption{Values of $F_n$, $T_n$, $A_n$, $C_{n,1}$, and $A_{n,1}$ for $k=4$; $\phi\approx1.92756198$}\label{Table4}
\begin{tabular}{|l|ll|ll|ll|ll|ll|} 
\hline 
$n$ & $F_n$ & Appr. & $T_n$ & Appr. & $A_n$ & Appr. & $C_{n,1}$ & Appr. & $A_{n,1}$ & Appr. \\
\hline
$0$&$1$&$0.566$&$0$&$0.162$&$0.0$&$0.287$&$0$&$0.212$&$0.0$&$0.374$\\
$1$&$1$&$1.092$&$1$&$0.931$&$1.0$&$0.853$&$1$&$0.729$&$1.0$&$0.667$\\
$2$&$2$&$2.104$&$3$&$2.986$&$1.5$&$1.419$&$2$&$2.023$&$1.0$&$0.961$\\
$3$&$4$&$4.056$&$8$&$8.053$&$2.0$&$1.986$&$5$&$5.091$&$1.25$&$1.255$\\
$4$&$8$&$7.818$&$20$&$19.951$&$2.5$&$2.552$&$12$&$12.11$&$1.5$&$1.549$\\
$5$&$15$&$15.07$&$47$&$46.993$&$3.133$&$3.118$&$28$&$27.77$&$1.867$&$1.843$\\
$6$&$29$&$29.049$&$107$&$107.033$&$3.69$&$3.685$&$62$&$62.063$&$2.138$&$2.136$\\
$7$&$56$&$55.994$&$238$&$238.024$&$4.25$&$4.251$&$136$&$136.082$&$2.429$&$2.43$\\
$8$&$108$&$107.931$&$520$&$519.932$&$4.815$&$4.817$&$294$&$294.017$&$2.722$&$2.724$\\
$9$&$208$&$208.044$&$1120$&$1120.024$&$5.385$&$5.384$&$628$&$627.863$&$3.019$&$3.018$\\
$10$&$401$&$401.017$&$2386$&$2386.03$&$5.95$&$5.95$&$1328$&$1328.068$&$3.312$&$3.312$\\
$11$&$773$&$772.985$&$5037$&$5036.995$&$6.516$&$6.516$&$2787$&$2787.047$&$3.605$&$3.606$\\
$12$&$1490$&$1489.977$&$10553$&$10552.957$&$7.083$&$7.083$&$5810$&$5809.98$&$3.899$&$3.899$\\
$13$&$2872$&$2872.024$&$21968$&$21968.029$&$7.649$&$7.649$&$12043$&$12042.935$&$4.193$&$4.193$\\
$14$&$5536$&$5536.004$&$45480$&$45480.014$&$8.215$&$8.215$&$24840$&$24840.053$&$4.487$&$4.487$\\
$15$&$10671$&$10670.99$&$93709$&$93708.986$&$8.782$&$8.782$&$51016$&$51016.018$&$4.781$&$4.781$\\
\hline
\end{tabular} 
\end{table}

We also remark that for $k=2$, Binet's formula leads to the same approximating asymptotic for $F_n$---for $n\geq 0$, Binet's formula for the $n$th Fibonacci number yields
\[
F_n = \frac{1}{\sqrt{5}}\left(\phi^{n+1} - \left(\frac{-1}{\phi}\right)^{n+1}\right).
\]
Since $0<1/\phi<1$, this gives the asymptotic formula $F_n\sim \phi^{n+1}/\sqrt{5}$---specifically, we have
\[
F_n = \frac{\phi^{n+1}}{\sqrt{5}} + E(n),
\]
where the error term
\[
E(n) = -\frac{1}{\sqrt{5}}\left(\frac{-1}{\phi}\right)^{n+1}
\]
approaches $0$ at an exponential rate. Indeed, this is the same as our asymptotic $F_n\sim {\phi^{n+1}}/{G'(\sigma)}$ for $k=2$, because for $k=2$ we have $\phi=(1+\sqrt{5})/2$ and $G'(x)=1+2x$, so $G'(\sigma)=G'(1/\phi)=\sqrt{5}$.

Finally, because all of our asymptotic formulas have absolute error terms that decay exponentially, in Tables \ref{Table2}, \ref{Table3}, and \ref{Table4} we obtain excellent approximations to our values $F_n$, $T_n$, $A_n$, $C_{n,j}$, and $A_{n,j}$ even for small values of $n$.

\bibliographystyle{plain}
\bibliography{CompositionsBibAbbrev}


\end{document}